\documentclass[letterpaper,11pt,twoside,keywordsasfootnote,addressatend,noinfoline]{article}
\usepackage{fullpage}
\usepackage[english]{babel}
\usepackage{amssymb}
\usepackage{amsmath}
\usepackage{amsthm}
\usepackage{epsfig}
\usepackage{subfigure}
\usepackage{mathrsfs}
\usepackage{color}
\usepackage{imsart}
\usepackage{enumerate}
\usepackage{comment}

\newtheorem{theorem}{Theorem}
\newtheorem{lemma}[theorem]{Lemma}
\newtheorem{definition}[theorem]{Definition}
\newtheorem{proposition}[theorem]{Proposition}

\newcommand{\N}{\mathbb{N}}
\newcommand{\Z}{\mathbb{Z}}
\newcommand{\R}{\mathbb{R}}

\newcommand{\Lat}{\mathscr{L}}

\newcommand{\norm}[1]{|\!|#1|\!|}
\newcommand{\ind}{\mathbf{1}}
\newcommand{\ep}{\epsilon}
\newcommand{\n}{\hspace*{-5pt}}
\newcommand{\blue}[1]{\textcolor{blue}{#1}}

\usepackage{mathtools}

\DeclareMathOperator{\card}{card}

\DeclareMathOperator{\uniform}{Uniform}


\begin{document}

\begin{frontmatter}
\title{Evolutionary games on the lattice: multitype contact \\ process with density-dependent birth rates}
\runtitle{Multitype contact process with  density-dependent birth rates}
\author{Jonas K\"oppl, Nicolas Lanchier and Max Mercer}
\runauthor{Jonas K\"oppl, Nicolas Lanchier, and Max Mercer}
\address{Weierstrass Institute \\ 10117 Berlin, Germany. \\ koeppl@wias-berlin.de}
\address{School of Mathematical and Statistical Sciences \\ Arizona State University \\ Tempe, AZ 85287, USA. \\ nicolas.lanchier@asu.edu \\ mamerce1@asu.edu}
\maketitle

\begin{abstract} \
 Interacting particle systems of interest in evolutionary game theory introduced in the probability literature consist of variants of the voter model in which each site is occupied by one player.
 The goal of this paper is to initiate the study of evolutionary games based more realistically on the multitype contact process in which each site is either empty or occupied by a player following one of two possible competing strategies.
 Like in the symmetric multitype contact process, players have natural death rate one and natural birth rate~$\lambda$.
 Following the traditional modeling approach of evolutionary game theory, the process also depends on a payoff matrix~$A = (a_{ij})$ where~$a_{ij}$ represents the payoff a type~$i$ player receives from each of its type~$j$ neighbors, and the actual birth rate is an increasing function of the payoff.
 Using various couplings and block constructions, we first prove the existence of a phase transition in the direction of the intra payoff~$a_{11}$ or~$a_{22}$ while the other three payoffs are fixed.
 We also look at the behavior near the critical point where all four payoffs are equal to zero, in which case the system reduces to the symmetric multitype contact process.
 The effects of the intra payoffs~$a_{11}$ and~$a_{22}$ are studied using various couplings and duality techniques, while the effects of the inter payoffs~$a_{12}$ and~$a_{21}$ are studied in one dimension using a coupling with the contact process to control the interface between the~1s and the~2s.
\end{abstract}

\begin{keyword}[class=AMS]
\kwd[Primary ]{60K35, 91A22}
\end{keyword}

\begin{keyword}
\kwd{Multitype contact process; Evolutionary game theory; Block construction; Duality.}
\end{keyword}

\end{frontmatter}


\section{Introduction}
\label{sec:intro}
 The field of evolutionary game theory was developed by Maynard Smith~\cite{maynardsmith_1982}, and first appeared in his work with Price~\cite{maynardsmith_price_1973}.
 The basic idea in this field is to reinterpret the different strategies as species and their payoff as fitness  to create realistic dynamical systems with density-dependent birth and/or death rates.
 The earliest model in the field of evolutionary game theory is the~(nonspatial deterministic) replicator equation.
 Having an~$n \times n$ payoff matrix~$A = (a_{ij})$ where~$a_{ij}$ represents the payoff a type~$i$ player receives from a type~$j$ player, and letting~$u_i$ denote the density of type~$i$ players in the population, the payoff of each type~$i$ player is given by
 $$ \phi_i = \phi_i (u_1, u_2, \ldots, u_n) = a_{i1} u_1 + a_{i2} u_2 + \cdots + a_{in} u_n. $$
 Reinterpreting the payoff as fitness~(a birth rate when the payoff is positive or minus a death rate when the payoff is negative) and assuming that each individual produced replaces a player chosen uniformly at random, while each individual removed is replaced by a player chosen uniformly at random, result in the following so-called replicator equation~\cite{hofbauer_sigmund_1998}:
 $$ \begin{array}{c} u_i' = (\phi_i u_i)(\sum_{j \neq i} u_j) - (\sum_{j \neq i} \phi_j u_j) \,u_i = \sum_{j \neq i} (\phi_i - \phi_j) \,u_i u_j \quad \hbox{for} \quad i = 1, 2, \ldots, n. \end{array} $$
 This system of coupled differential equations can be turned into a spatially explicit stochastic process following the modeling approach of~\cite{nowak_2006, nowak_may_1992}.
 More precisely, to include a spatial structure in the form of local interactions, we first assume that the players are located on the~$d$-dimensional integer lattice~$\Z^d$, making the state at time~$t$ a spatial configuration
 $$ \xi_t : \Z^d \longrightarrow \{1, 2, \ldots, n \} \quad \hbox{where} \quad \xi_t (x) = \hbox{strategy of the player at site~$x$}. $$
 Then, writing~$x \sim y$ to indicate that the two lattice points~$x$ and~$y$ are nearest neighbors~(distance one apart), the payoff of the player at site~$x$ is defined as
\begin{equation}
\label{eq:payoff}
\begin{array}{c} \phi (x, \xi_t) = \sum_{i, j} a_{ij} \,f_j (x, \xi_t) \,\ind \{\xi_t (x) = i \} \quad \hbox{where} \quad f_j (x, \xi_t) = \sum_{y \sim x} \ind \{\xi_t (y) = j \} / 2d \end{array}
\end{equation}
 denotes the fraction of nearest neighbors of site~$x$ following strategy~$j$, i.e., the payoff only depends on the strategy of the neighbors.
 The fitness of the player at~$x$ is then given by
 $$ \Phi (x, \xi_t) = (1 - w) \times 1 + w \times \phi (x, \xi_t) = (1 - w) \times 1 + w \times \hbox{payoff}, $$
 where the parameter~$w \in [0, 1]$ represents the strength of selection.
 Weak selection refers to the case where~$w$ is small, while strong selection means that~$w = 1$.
 The most popular models that fall in this framework are the birth-death updating process and the death-birth updating process introduced in~\cite{ohtsuki_al_2006}.
 In the birth-death updating process, the fitness is interpreted as a birth rate, and offspring replace a neighbor of the parent's site chosen uniformly at random, so the rate at which site~$x$ switches from strategy~$i$ to strategy~$j$ is given by
\begin{equation}
\label{eq:birth-death}
\begin{array}{c} c_{i \to j} (x, \xi_t) = \sum_{y \sim x} \Phi (y, \xi_t) \,\ind \{\xi_t (y) = j \} / 2d \quad \hbox{for all} \quad i \neq j. \end{array}
\end{equation}
 In contrast, in the death-birth updating process, players die at rate one and are instantaneously replaced by the offspring of a neighbor chosen at random with a probability proportional to its fitness, so the local transition rates are given by
\begin{equation}
\label{eq:death-birth}
\begin{array}{c} c_{i \to j} (x, \xi_t) = \sum_{y \sim x} \Phi (y, \xi_t) \,\ind \{\xi_t (y) = j \} / \sum_{y \sim x} \Phi (y, \xi_t) \quad \hbox{for all} \quad i \neq j. \end{array}
\end{equation}
 Taking~$w = 0$, the transition rates in~\eqref{eq:birth-death}--\eqref{eq:death-birth} simplify to~$f_j (x, \xi_t)$, showing that both processes reduce to the voter model~\cite{clifford_sudbury_1973, holley_liggett_1975}.
 In the presence of weak selection, these two processes were studied in~\cite{chen_2013, chen_2018, ma_durrett_2018} in the context of two-strategy games, while~\cite{cox_durrett_2016, durrett_2014, nanda_durrett_2017} also considered games with more strategies such as rock-paper-scissors.
 In the weak selection limit~$w \to 0$, voter model perturbations techniques developed in~\cite{cox_durrett_perkins_2013} can be used to have a precise description of the phase structure of the processes.
 In the presence of strong selection~$w = 1$, more qualitative aspects such as the existence of phase transitions were proved in~\cite{evilsizor_lanchier_2016, lanchier_2015}.
 Other natural variants of these models with discontinuous transition rates were also studied rigorously in the presence of strong selection: the best-response dynamics~\cite{evilsizor_lanchier_2014}, and the death-birth of the fittest process~\cite{foxall_lanchier_2017}.
 For more details about these models, we refer the reader to~\cite[Chapter~7]{lanchier_2024}.


\section{Model description and main results}
\label{sec:results}
 In the previous models, each birth/death induces the instantaneous death/birth of a neighbor to ensure that each site is occupied by exactly one player at all times.
 These models and all the models of interacting particle systems of interest in evolutionary game theory that have been studied in the probability literature, consist of variants of the voter model with density-dependent birth and/or death rates.
 As far as we know, the only exception is the variant of Neuhauser's multitype contact process~\cite{neuhauser_1992} introduced in~\cite{lanchier_2019}.
 This process, however, was only designed to model the interactions among cooperators and defectors in the prisoner's dilemma rather than general games described by a payoff matrix.
 The main objective of this paper is to initiate the study of spatial evolutionary games based more realistically on the multitype contact process instead of the voter model.
 In particular, focusing for simplicity on two-strategy games, the state at time~$t$ is now
 $$ \xi_t : \Z^d \longrightarrow \{0, 1, 2 \} \quad \hbox{where} \quad \xi_t (x) = \hbox{strategy of the player at site~$x$}, $$
 with the convention~0~=~empty.
 The dynamics combines the dynamics of the multitype contact process and the dynamics of the birth-death updating process~\eqref{eq:birth-death}.
 Like in the symmetric multitype contact process, we assume that, regardless of their strategy, the players have the same natural birth rate~$\lambda$ and the same natural death rate one, and that births onto already occupied sites are suppressed.
 Having a payoff matrix~$A = (a_{ij})$, each player now receives a payoff from its occupied neighbors while empty neighbors have no effects, so the payoff can be defined as in~\eqref{eq:payoff} assuming that~$a_{i0} = 0$, i.e., empty sites give a zero payoff.
 Like in the birth-death updating process, the payoff of the players affects their birth rate.
 To have a well-defined positive birth rate even when the payoff is negative, we assume that the natural birth rate~$\lambda$ of the players is multiplied by the exponential of their payoff.
 In particular, the transition rates are given by
\begin{equation}
\label{eq:multitype}
\begin{array}{c} c_{0 \to i} (x, \xi_t) = \sum_{y \sim x} \Phi (y, \xi_t) \,\ind \{\xi_t (y) = i \} / 2d \quad \hbox{and} \quad c_{i \to 0} (x, \xi_t) = 1 \end{array}
\end{equation}
 for all~$i \neq 0$, where the fitness function is defined as
 $$ \begin{array}{c} \Phi (x, \xi_t) = \lambda \exp (\phi (x, \xi_t)) = \lambda \exp \big(\sum_{i, j \neq 0} a_{ij} \,f_j (x, \xi_t) \,\ind \{\xi_t (x) = i \} \big). \end{array} $$
 Note that, because~$\exp (0)$ = 1, players with no neighbors give birth at rate~$\lambda$, while players with a positive/negative payoff give birth at a higher/lower rate.
 Even though we choose the function~$\exp$ to fix the ideas, our results hold more generally for any function increasing from zero to~$+ \infty$, and equal to one at zero.
\begin{figure}[t]
\centering
\scalebox{0.90}{\input{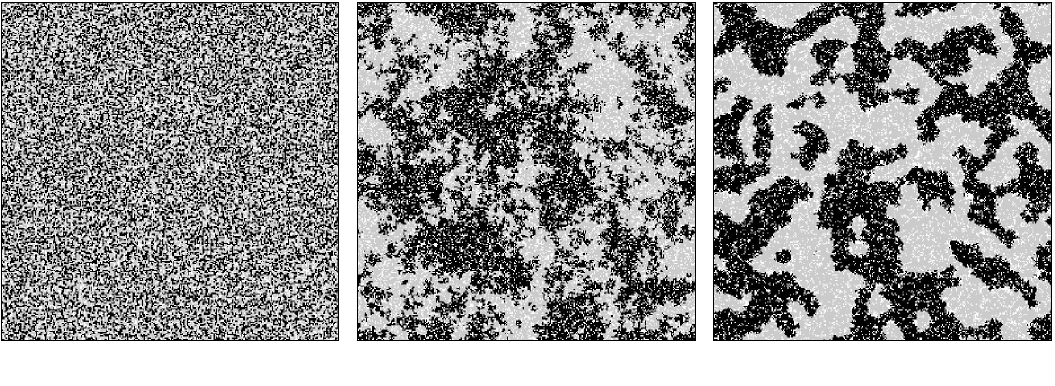_t}}
\caption{\upshape
 Snapshots at time~1000 of the process~\eqref{eq:multitype} on the~$300 \times 300$ torus with inter payoffs~$a_{12} = a_{21} = 0$ and various values of the intra payoffs and the birth rate.
 The type~1 players are represented in black, the type~2 players in grey, and the empty sites in white.
 When~$a_{11} = a_{22} = 0$~(middle panel), the process reduces to the symmetric multitype contact process in which clustering occurs.
 In contrast, when~$a_{11} = a_{22} < 0$~(left panel), players help the opposite type, which results in coexistence, but when~$a_{11} = a_{22} > 0$~(right panel), players help their own type, which results in a clustering more pronounced than in the symmetric case.}
\label{fig:MCP}
\end{figure}
 Figure~\ref{fig:MCP} shows simulations of the two-dimensional process for various values of the natural birth rate and the intra payoffs while the inter payoffs are set to zero. \\
\indent
 To characterize the limiting behavior of the process, we say that a strategy starting from a positive density survives, respectively, dies out, if its density stays bounded away from zero, respectively, converges to zero, as time goes to infinity.
 In the presence of both strategies, we say that a strategy wins if it survives while the other strategy dies out, and that both strategies coexist if they both survive.
 From now on, we let~$\lambda_c$ denote the critical value of Harris' contact process~\cite{harris_1974} on the~$d$-dimensional integer lattice.
 In preparation of this paper, we studied in~\cite{koppl_lanchier_mercer_2024} the conditions for survival/extinction of the process~\eqref{eq:multitype} in the presence of only one strategy, say strategy~1, in which case the model only depends on two parameters:
 the birth rate~$\lambda$ and the payoff~$a_{11} = a$.
 We proved that, for all~$a \in \R$, there is at least one phase transition from extinction to survival in the direction of the birth rate~$\lambda$, while, for all~$\lambda > 0$, there is at least one phase transition from extinction to survival in the direction of the payoff~$a$.
 The phase transition with respect to the payoff~$a$ is proved using various block constructions to compare the process with oriented site percolation, which implies that, for all~$\lambda > 0$, there exist~$- \infty < a_- \leq a_+ < + \infty$ such that
\begin{equation}
\label{eq:survival-extinction}
\begin{array}{rcl}
\hbox{the process decays exponentially} & \hbox{for all} & a < a_-, \vspace*{4pt} \\
\hbox{the process grows linearly} & \hbox{for all} & a > a_+. \end{array}
\end{equation}
 The proof of the exponential decay relies more specifically on a perturbation argument and the fact that, when~$a_{11} = - \infty$, the~1s  behave like a system of coalescing random walks with killings, which remains true in the presence of~2s whenever~$a_{12} < \infty$.
 Using also the linear growth of the~2s in the absence of~1s when~$\lambda > \lambda_c$ and~$a_{22} \geq 0$, and the fact that, with positive probability, a linearly growing cluster of~2s never interacts with the exponentially decaying~1s, we deduce that
\begin{theorem}
\label{th:decay}
 Assume that~$\lambda > \lambda_c$ and~$a_{22} \geq 0$. Then,
 $$ \hbox{there exists~$a_- > - \infty$ such that the 2s win for all~$a_{11} < a_-$.} $$
\end{theorem}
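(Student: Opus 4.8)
The plan is to combine, in the spirit of the discussion preceding the statement, three ingredients: an exponential decay estimate for the~1s that is robust to the presence of the~2s, a supercritical contact process lower bound for the~2s, and a decoupling argument ensuring that a growing cluster of~2s can develop without ever interacting with the vanishing~1s. The first two are essentially inputs; the real work lies in the last step and in combining the three.

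For the~1s, I would argue that, uniformly in the configuration of~2s, they decay exponentially as soon as~$a_{11} < a_-$. The birth rate of a~1 at~$y$ onto an empty neighbor is~$\lambda \exp(a_{11} f_1(y, \xi_t) + a_{12} f_2(y, \xi_t))/2d$. Because~$a_{12} < \infty$, the factor~$\exp(a_{12} f_2)$ stays between~$\exp(-|a_{12}|)$ and~$\exp(|a_{12}|)$, and the site-exclusion caused by the~2s can only reduce the~1 births, so the presence of~2s amounts to a bounded perturbation of the one-type dynamics. As~$a_{11} \to -\infty$ a~1 can reproduce only when it has no~1-neighbor, so the~1s perform coalescing random walks with killing: when two~1s meet, clustering suppresses their births and the trapped particles simply die at rate one. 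This system is subcritical, and the perturbation argument underlying~\eqref{eq:survival-extinction} carries over to the two-type model to give a finite threshold~$a_-$ below which the density of~1s decays exponentially, hence converges to zero. Note that the suppression makes the dynamics non-monotone, which is precisely why one compares with coalescing random walks with killing rather than attempting a naive stochastic domination. In particular, for every~$a_{11} < a_-$ the~1s die out regardless of what the~2s do.

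For the~2s, I would bound them below by a contact process. When a~2 at~$y$ reproduces onto an empty neighbor its rate is~$\lambda \exp(a_{21} f_1 + a_{22} f_2)/2d$, and in any region free of~1s one has~$f_1 = 0$ and, since~$a_{22} \geq 0$, the factor~$\exp(a_{22} f_2) \geq 1$. Thus, away from the~1s, the~2s dominate a contact process with birth parameter~$\lambda > \lambda_c$, which survives from a finite seed with positive probability and, on survival, spreads linearly with some asymptotic speed~$v_2 > 0$ by the shape theorem for the supercritical contact process.

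The remaining step, which I expect to be the main obstacle, is to decouple the two families so that on a positive-probability event the~2 cluster never feels the~1s and therefore evolves exactly as the pure supercritical contact process of the previous paragraph. The difficulty is that the~2 front advances at linear speed toward wherever the~1s sit, so a fixed spatial gap between the initial types cannot keep them apart forever; instead one must use that the~1s disappear before the~2 front can reach them. Quantitatively, I would complement the exponential decay of~\eqref{eq:survival-extinction} with a confinement estimate, namely that, starting from a bounded set, the probability that the subcritical system of~1s ever reaches distance~$n$ before dying out is at most~$C \exp(-cn)$, and then track the space-time cone swept out by the~2 front at speed~$v_2$ from a seed placed at distance of order~$L$ from the~1s. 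Bounding the chance that any~1 enters this cone by~$C \exp(-cL)$ leaves, for~$L$ large, a positive-probability event on which the supports of the two types remain disjoint for all time; there the~2s survive as a genuine supercritical contact process while, by the first step, the~1s die out, so the~2s win. The technical heart is the interlocking of the confinement length scale with the linear speed~$v_2$, and, in order to reach arbitrary initial densities of~1s rather than a bounded seed, the recasting of this comparison as a block construction against oriented percolation in which a block is declared good when the~2s cross it and no~1 is present; the dependence of the good-block events on the dynamically thinning field of~1s is then the delicate point.
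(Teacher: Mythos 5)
Your proposal is correct and takes essentially the same route as the paper, which deduces Theorem~\ref{th:decay} from exactly the three ingredients you identify: exponential decay of the~1s for~$a_{11} < a_-$ via the perturbation/coalescing-random-walks-with-killing comparison of~\cite{koppl_lanchier_mercer_2024}, robust to the presence of~2s because~$a_{12} < \infty$; linear growth of the~2s as a supercritical contact process in the absence of~1s when~$\lambda > \lambda_c$ and~$a_{22} \geq 0$; and the positive-probability event that a linearly growing cluster of~2s never interacts with the exponentially decaying~1s. The paper only sketches this deduction rather than writing out the decoupling step, so your more detailed treatment of that step is a faithful elaboration rather than a different argument.
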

\noindent
 More generally, the~2s win when they are supercritical~(in the absence of~1s), and the~1s decay exponentially, which happens when~$a_{11} < a_-$.
 Looking now at the process with~$a_{11}$ large, one expects the~1s to win even when the~2s are supercritical.
 The block construction showing linear growth~\eqref{eq:survival-extinction} of the~1s in the absence of the~2s does not imply the previous result because the~2s now prevents the~1s from expanding freely, but the proof extends to obtain that
\begin{theorem}
\label{th:a11large}
 Assume that~$\lambda > 0$. Then,
 $$ \hbox{there exists~$a_+ < + \infty$ such that the~1s win for all~$a_{11} > a_+$.} $$
\end{theorem}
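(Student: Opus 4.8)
\section*{Proof proposal for Theorem~\ref{th:a11large}}

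The plan is to extend the block construction used in~\cite{koppl_lanchier_mercer_2024} to prove the linear growth~\eqref{eq:survival-extinction} of the~1s, upgrading its key local estimate so that it remains valid, and uniform, in the presence of the~2s. As in that work, I would compare the process, after a space-time rescaling, with a supercritical oriented percolation: partition space-time into blocks of spatial side~$L$ and temporal length~$T$, declare a block \emph{good} when its central region is entirely occupied by~1s, and obtain the comparison from a restart argument in the graphical representation, whose finite space-time range makes the good blocks dominate an~$M$-dependent oriented percolation. The essential new point, relative to the single-type setting, is that the good event must retain probability close to one \emph{uniformly over the configuration of 2s and empty sites} outside the seed, so that a block full of~1s spreads to its neighbors, filling them with~1s and sweeping away the~2s it meets, within time~$T$, with probability tending to one as~$a_{11} \to \infty$.

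The heart of the matter is this local estimate, which I would split into two parts after recording the basic fitness bound: since~$a_{12}, a_{21}, a_{22}$ are fixed, any type~1 site with at least one type~1 neighbor has fitness at least~$R(a_{11}) = \lambda \exp (a_{11}/2d - C)$ for a constant~$C$ depending only on the fixed payoffs, and~$R(a_{11}) \to \infty$. First, the seed stays free of holes and of~2s: an empty site created inside a full region by a death is surrounded by~1s of fitness close to~$\lambda \exp (a_{11})$, hence is recolonized by a~1 at rate at least~$R(a_{11})/2d \to \infty$, far exceeding the bounded rate at which a~2 could invade it, and no~2 can ever appear in a region devoid of~2 neighbors. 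Second, and more delicately, the~1 front advances across both empty sites and sites held by~2s: an empty boundary site is filled by a~1 at rate at least~$R(a_{11})/2d$, while a boundary~2 must first be vacated by the death of the~2 at rate one and is then recolonized by a~1 at rate at least~$R(a_{11})/2d$ against a bounded rate~$K$ for recolonization by a~2, so it is converted into a~1 with probability at least~$R(a_{11})/(R(a_{11}) + 2dK) \to 1$. Thus the front sweeps the~2s away in a time of order one rather than being held back by them.

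Combining these estimates, I would fix~$L$ and~$T$ first and then take~$a_{11}$ large enough that the good event has probability above the oriented-percolation threshold. The comparison then yields that, from any seed containing a full block of~1s, the region occupied by the~1s grows linearly and contains no~2s in its interior, and, since the good region is persistent, once the front passes a given site that site never carries a~2 again. Starting from a configuration in which the~1s have positive density, a short bootstrapping argument, of the same kind already used in~\cite{koppl_lanchier_mercer_2024} to start the growth of the~1s alone and unaffected by the~2s since they only die at rate one and so keep freeing up sites, produces such seeds almost surely, and their linearly growing, 2-free regions engulf every fixed site in finite time. Hence~$\p (\xi_t (x) = 2) \to 0$, so the density of the~2s vanishes while the~1s remain dense, which is exactly the statement that the~1s win.

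The main obstacle I anticipate is the front-advance estimate in the presence of the~2s---specifically, making rigorous, through an explicit coupling in the graphical representation, the race between the fast recolonization of newly vacated boundary sites by~1s and the slow recolonization by~2s, uniformly over the adversarial configuration of the~2s, together with the guarantee that no~2 ever leaks into the interior of the advancing~1 region. Controlling this race is precisely what lifts the single-type linear-growth estimate of~\cite{koppl_lanchier_mercer_2024} to the two-type statement, and it is where the largeness of~$a_{11}$, rather than any condition on~$\lambda$, does the essential work.
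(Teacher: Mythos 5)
Your overall route is the paper's: a block construction in which the key local estimate is that a small fully occupied block of~1s spreads to a fixed neighborhood and clears it of~2s within a fixed time~$T$, with probability at least~$1 - \ep$ uniformly over the outside configuration, the mechanism being exactly the one you isolate --- a~2 in the way must first die (at rate one, and the paper's Lemma~\ref{lem:death} guarantees at least one and at most~$N$ death marks per site in~$[0,T]$), after which the~1s win the recolonization race because their per-edge birth rate~$M_1 (a_{11}, a_{12}) \to \infty$ as~$a_{11} \to \infty$ while the~2s' rate~$M_2 (a_{21}, a_{22})$ stays bounded; one then fixes~$T$ and~$N$ first and sends~$a_{11} \to \infty$, and compares with a~$1 - \ep$ oriented site percolation with finite range of dependence.

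There is, however, one step that fails as written: the deduction that ``since the good region is persistent, once the front passes a given site that site never carries a~2 again,'' and hence that the linearly growing region ``contains no 2s in its interior.'' The percolation comparison does not deliver this. The good blocks dominate the wet sites of an oriented site percolation with density~$1 - \ep$, so there is always a positive density~$\ep$ of closed sites, and the space-time blocks corresponding to closed sites may perfectly well contain type~2 players; nothing in the construction prevents a~2 from reappearing at a site long after the front has passed it, through a chain of bad blocks. What the comparison gives directly is survival of the~1s (from percolation of the open sites), which by itself only rules out that the~2s win --- it does not rule out coexistence. To get extinction of the~2s you need the complementary percolation estimate: the presence of a~2 in a block at level~$n$ forces a directed path of closed sites of length at least~$n$ reaching back to time zero, and for~$\ep$ small enough the closed sites do not percolate (this is the result of~\cite{durrett_1992} the paper invokes), so the probability of such a path tends to zero and~$P (\xi_t (x) = 2) \to 0$. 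This is the one missing ingredient; with it, your argument matches the paper's proof.
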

\noindent
 The idea is to show that, with probability close to one, a~$2 \times 2 \times \cdots \times 2$ block of~1s survives at least until the surrounding~2s die, at which time the block expands.
 This and a block construction show that the~1s not only grow linearly but also exclude the~2s.
\begin{figure}[t!]
\centering
\scalebox{0.40}{\input{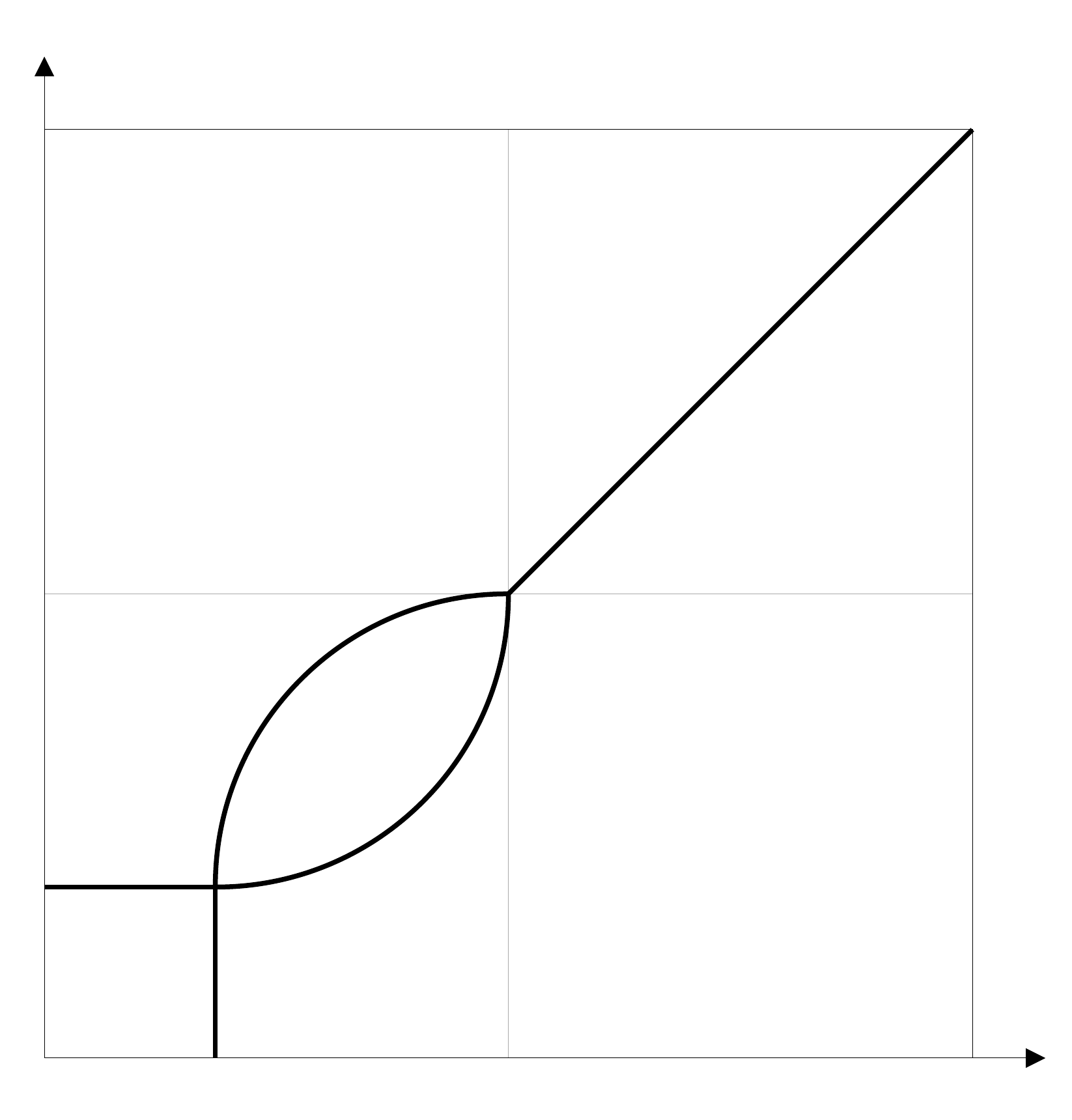_t}}
\caption{\upshape{
 Phase structure of the multitype contact process with density-dependent birth rates in the~$a_{11}-a_{22}$ plane when~$\lambda > \lambda_c$ and~$a_{12} = a_{21} = 0$ suggested by numerical simulations of the two-dimensional process.
 The figure also underlines the parameter regions covered in Theorems~\ref{th:decay}, \ref{th:a11large}, and~\ref{th:a11>0}.}}
\label{fig:phase1}
\end{figure}
\begin{figure}[t!]
\centering 
\scalebox{0.40}{\input{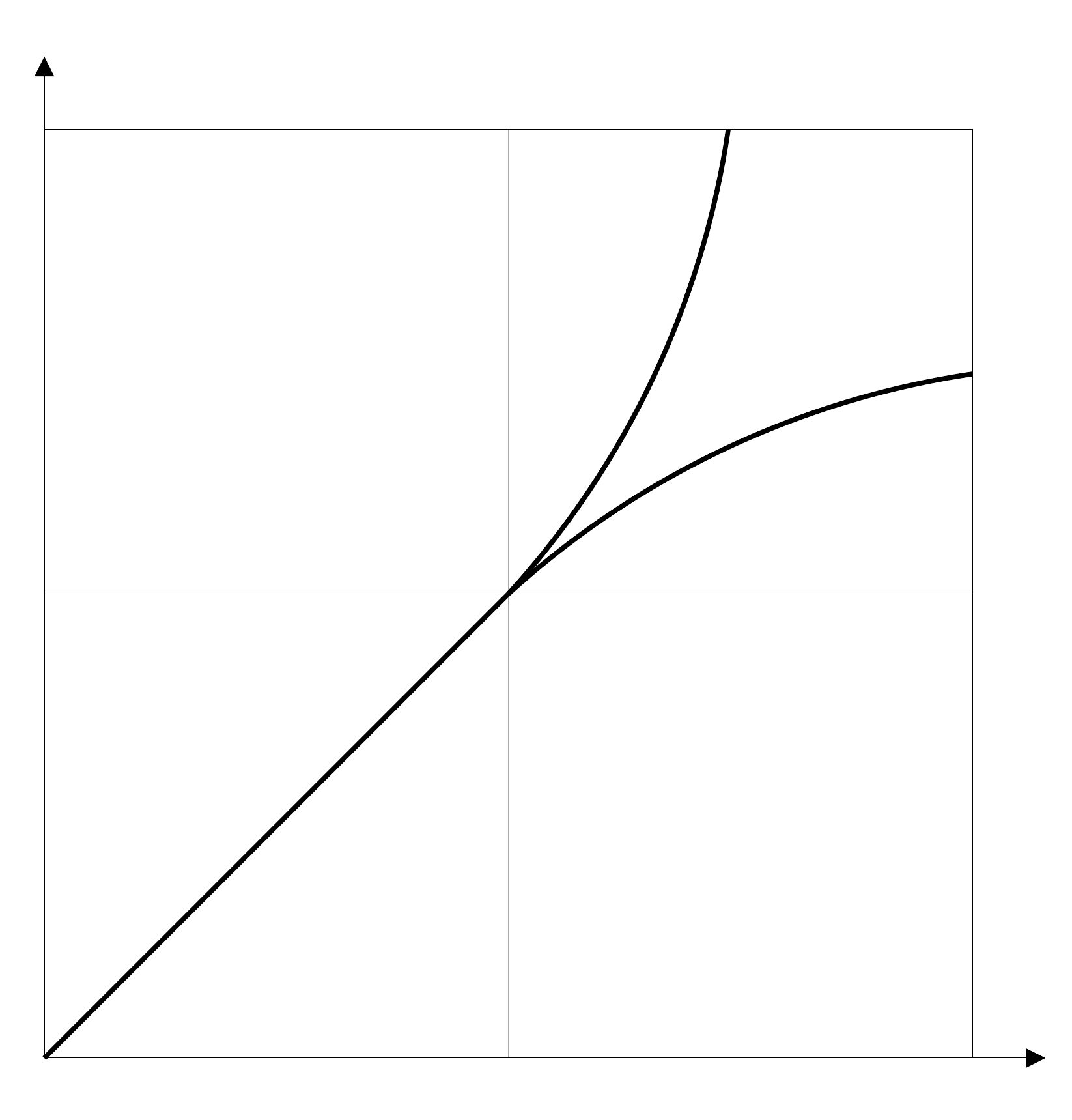_t}}
\caption{\upshape{
 Phase structure of the multitype contact process with density-dependent birth rates in the~$a_{12}-a_{21}$ plane when~$\lambda > \lambda_c$ and~$a_{11} = a_{22} = 0$ suggested by numerical simulations of the two-dimensional process.
 We conjecture that there is no coexistence region in the one-dimensional nearest-neighbor case.}}
\label{fig:phase2}
\end{figure}
 We now look at parameter regions that extend near the critical point where all the payoffs are equal to zero, in which case the process reduces to the symmetric multitype contact process.
 To begin with, we study the effects of the intra payoff~$a_{11}$ that models the influence of the~1s on the nearby~1s.
 Assuming that~$\lambda > \lambda_c$ and~$a_{11} > 0$ while the other three payoffs are equal to zero, the set of~1s dominates stochastically its counterpart~(and the set of~2s is dominated stochastically by its counterpart) in a variant of the multitype contact process where pairs of adjacent~1s give birth at a rate larger than~$\lambda$.
 This process has a tractable dual process that can be used to prove that the~1s win.
 Using a coupling argument to compare processes with different payoff matrices shows that the result still holds if we increase~$a_{12}$ and decrease~$a_{21}$ and~$a_{22}$, from which it follows that
\begin{theorem}
\label{th:a11>0}
 Assume that~$\lambda > \lambda_c$, $a_{11} > 0$, and~$a_{12} \geq 0 \geq a_{21}, a_{22}$. Then, the~1s win.
\end{theorem}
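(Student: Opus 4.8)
The plan is to split the argument into a base case, where $a_{12}=a_{21}=a_{22}=0$ while $a_{11}>0$, and a monotonicity step that upgrades the base case to the full region $a_{12}\ge 0\ge a_{21},a_{22}$. Throughout I work with the single-site order $2\prec 0\prec 1$ and the induced coordinatewise order on configurations, under which $\eta\preceq\xi$ means $\{\eta=1\}\subseteq\{\xi=1\}$ and $\{\xi=2\}\subseteq\{\eta=2\}$. The point of this order is that, along any coupling maintaining $\eta_t\preceq\xi_t$, the event that the 1s win for the dominated process forces the 1s to win for the dominating one, since the latter has at least as many 1s and at most as many 2s at every time.

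First I would construct a monotone coupling between the process $\xi$ with the target matrix $A$ and the process $\eta$ with the reduced matrix $A^\circ$ given by $a_{11}^\circ=a_{11}$ and $a_{12}^\circ=a_{21}^\circ=a_{22}^\circ=0$, started from $\eta_0=\xi_0$, and preserving $\eta_t\preceq\xi_t$ for all $t$. Using a graphical representation with shared death marks and, for each oriented pair $(y,x)$ and each type, a Poisson family of potential birth marks carrying i.i.d.\ uniform labels that are thinned according to the current fitness, order preservation reduces to two rate inequalities valid whenever $\eta\preceq\xi$. The type-1 birth rate at an empty site is at least as large under $A$ as under $A^\circ$, because $\xi$ has at least as many type-1 neighbours and, since $a_{11}>0$ and $a_{12}\ge 0$, each common type-1 neighbour $y$ contributes fitness $\lambda\exp(a_{11}f_1(y,\xi)+a_{12}f_2(y,\xi))\ge\lambda\exp(a_{11}f_1(y,\eta))$, using $f_1(y,\xi)\ge f_1(y,\eta)$. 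Dually, the type-2 birth rate is at most as large under $A$ as under $A^\circ$, because $\xi$ has at most as many type-2 neighbours and, since $a_{21},a_{22}\le 0$, each contributes fitness $\lambda\exp(a_{21}f_1+a_{22}f_2)\le\lambda$. A routine case analysis over the single-site transitions then shows that $\eta_t\preceq\xi_t$ is preserved, so that the 1s winning for $\eta$ implies the 1s winning for $\xi$, reducing Theorem~\ref{th:a11>0} to the base case.

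For the base case I would again use the order $2\prec 0\prec 1$ to dominate $\eta$ from below by a more tractable variant $\bar\eta$: the type-2 players still form a plain rate-$\lambda$ contact process, while each type-1 player gives birth at rate $\lambda$ when it has no type-1 neighbour and at the enhanced rate $\lambda\exp(a_{11}/2d)>\lambda$ as soon as it has at least one, this being the minimal enhancement forced by $a_{11}>0$; the same rate inequalities give $\bar\eta_t\preceq\eta_t$. Because adjacent pairs of 1s reproduce strictly faster than the 2s, the variant admits a tractable ancestral dual, and the plan is to read off the type of a site from the root of its ancestral path and to argue that, since $\lambda>\lambda_c$, the 1-lineages form a supercritical structure that both survives and, through its reproductive advantage over the rate-$\lambda$ 2-lineages, drives the density of 2s to zero. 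Monotonicity then transfers the conclusion up to $\eta$, and hence to $\xi$.

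I expect the main obstacle to be the extinction of the 2s rather than the survival of the 1s. Survival of the 1s is comparatively soft, since the 1s dominate a rate-$\lambda$ contact process and $\lambda>\lambda_c$. The delicate point is that the 1s and 2s genuinely compete for the empty sites—2-sites block 1-births and vice versa—so showing that the 2-density actually vanishes requires controlling this competition, which is exactly where the dual representation of the variant $\bar\eta$ (or, as an alternative, a block construction comparing the spread of a 1-dominated region with supercritical oriented percolation) must do the real work. Verifying that the ancestral dual remains tractable despite the density-dependent, two-valued 1-birth rate, and that the reproductive edge of the 1-lineages suffices to suppress the 2s uniformly in the initial condition, is the crux of the argument.
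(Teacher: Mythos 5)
Your overall architecture matches the paper's: a monotone coupling with respect to the order $2\prec 0\prec 1$ reduces the theorem to the base case $a_{12}=a_{21}=a_{22}=0$, $a_{11}>0$ (this is the paper's Proposition~\ref{prop:coupling}), and a second coupling dominates the base case from below by a variant in which the 2s form a plain rate-$\lambda$ contact process while a 1 with at least one type-1 neighbor gives birth at an enhanced rate (this is the paper's process $\zeta_t$, built from single arrows at rate $\lambda/2d$ and double arrows $z\to x\to y$ at rate $\frac{\lambda}{(2d)^2}(\exp(a_{11}/2d)-1)$, together with Proposition~\ref{prop:more-coupling}). Up to this point your plan is sound.

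The gap is exactly where you place it: you assert that the variant ``admits a tractable ancestral dual'' and defer the verification, but as stated the dual of your $\bar\eta$ is \emph{not} tractable. Whether a type-1 player at $x$ can use an enhanced birth onto $y$ depends on the type at an auxiliary neighbor $z$ at that instant; this is configuration-dependent information that the ancestral tree rooted at $(y,t)$ does not carry, so Neuhauser's ancestor-hierarchy machinery cannot be applied directly. The paper's essential additional idea is to isolate a positive-density subfamily of ``good'' double arrows: those preceded, in a short time window containing no other marks at $x$, $y$, or $z$, by a death mark at $z$ and then a single arrow $x\to z$. For such an arrow the graphical representation itself forces $z$ to carry the same type as $x$ when the double arrow fires, so its second piece $x\to y$ becomes an arrow usable by a 1 at $x$ but never by a 2 at $x$; its usability is determined by the type at $x$ alone. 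One then passes to a \emph{third} process built from the single arrows and only these ``1-arrows,'' which is dominated from below by $\zeta_t$ in the order $2\prec 0\prec 1$ and is a genuine Neuhauser-type multitype contact process in which type 1 has a positive density of extra birth arrows; the proof of Theorem~1 of~\cite{neuhauser_1992} then gives both survival of the 1s and extinction of the 2s. Without this device (or an equivalent one, such as the block construction you mention but do not carry out), the base case is not established, and it is precisely the step you flag as ``the crux'' that contains the real work.
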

\noindent
 We conjecture that strategy~2 wins when~$a_{11} < 0$ and~$a_{12} \leq 0 \leq a_{21}, a_{22}$ but the couplings and duality techniques used to prove the previous theorem fail to establish this result due in part to a lack of attractiveness:
 when~$a_{11} < 0$, increasing the local density of the~1s decreases the birth rate of the surrounding~1s, which decreases their local density.
 To conclude, we study the effects of the inter payoff~$a_{12}$ that models the influence of the~2s on the nearby~1s.
 Note that, from the point of view of the inter payoff, the lack of attractiveness happens when~$a_{12} > 0$ since in this case increasing the local density of the~2s increases the birth rate of the surrounding~1s, which decreases the local density of the~2s.
 The process is difficult to study more generally when~$a_{12} \neq 0$ while the other three payoff are equal to zero due to the lack of a mathematically tractable dual process and the fact that the interface between both strategies~(the set of edges that connect a~1 and a~2) is difficult to control/quantify.
 However, the analysis is simplified when~$d = 1$ and we start with only~1s on the negative integers and only~2s on the positive integers since in this case the interface is either empty or reduces to a single edge.
 More precisely, we can prove that
\begin{theorem}
\label{th:a12}
 Let~$\lambda > \lambda_c$ and~$\xi_0 (x) = \ind \{x \leq 0 \} + 2 \times \ind \{x > 0 \}$. Then,
 $$ \hbox{the~1s win when~$a_{12} > a_{21} \geq 0$ and~$a_{11} \geq a_{22} \geq 0$}. $$
\end{theorem}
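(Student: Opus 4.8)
The plan is to exploit the one-dimensional geometry to reduce the claim to a transience property of the interface between the two types, and to establish that property by a coupling with the contact process. First I would prove the \emph{separation property} that underlies everything: starting from $\xi_0 (x) = \ind \{x \leq 0\} + 2 \times \ind \{x > 0\}$, at all times every~1 lies to the left of every~2. Indeed, a new~1 can only appear at an empty site having a~1 among its two neighbors, so it cannot be created to the right of the leftmost~2; symmetrically no~2 is created to the left of the rightmost~1, and deaths preserve the order. Hence the configuration is always a block of~1s and holes followed by a block of~2s and holes, and the interface reduces to at most one edge. This lets me define
$$ r_t = \sup \{x : \xi_t (x) = 1 \} \quad \text{and} \quad l_t = \inf \{x : \xi_t (x) = 2 \}, \qquad r_t < l_t, $$
and reduces ``the~1s win'' to the two statements that the~1s survive and that~$l_t \to + \infty$.

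Survival of the~1s is the easy half. Since~$a_{11}, a_{12} \geq 0$, the payoff of any~1 is nonnegative, so each~1 gives birth onto each empty neighbor at rate at least~$\lambda / 2$; moreover~1s are never turned into~2s, and by the separation property the spread of the~1s to the left of the interface is never obstructed by a~2. The set of~1s therefore dominates a rate-$\lambda$ contact process issued from~$(- \infty, 0]$, which survives because~$\lambda > \lambda_c$, and the density of~1s near the origin stays bounded away from zero.

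The heart of the proof is to show that~$l_t \to + \infty$, and this is where the hypotheses enter. As in the proof of Theorem~\ref{th:a11>0}, a coupling comparing payoff matrices shows that increasing~$a_{11}$ or~$a_{12}$ and decreasing~$a_{22}$ or~$a_{21}$ can only favor the~1s, which reduces the problem to the worst case~$a_{11} = a_{22}$ and~$a_{12} = a_{21} + \ep$ and thereby isolates the role of the inter payoffs. In the fully occupied regime near the interface, say~$\cdots 1\,1\,2\,2 \cdots$ with interface edge~$(n, n+1)$, the edge moves right precisely when the boundary~2 dies (rate one) and the emptied site is recaptured by the boundary~1 rather than by the next~2, with probability~$e^{a_{11}/2} / (e^{a_{11}/2} + e^{a_{22}/2})$, and it moves left in the mirror event; since~$a_{11} \geq a_{22}$, the interface edge performs a random walk biased to the right, strictly when~$a_{11} > a_{22}$. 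When~$a_{11} = a_{22}$ this bias degenerates and the drift must be produced by the inter payoffs through a second-order mechanism: a~1 adjacent to a~2 has its payoff boosted by~$a_{12}$, so a hole opening just behind the~1-front is refilled at rate~$\sim \lambda e^{a_{12}/2} / 2$, whereas the symmetric refilling behind the~2-front proceeds at the slower rate~$\sim \lambda e^{a_{21}/2} / 2$. Thus~$a_{12} > a_{21}$ makes the~1s denser and more resilient immediately behind the interface, tilting the recapture competition in their favor over time.

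To convert this local bias into~$l_t \to + \infty$, I would couple a neighborhood of the interface with a contact process whose survival forces the interface to advance, and upgrade the resulting positive drift to linear escape by a block construction as in~\eqref{eq:survival-extinction}. The main obstacle is exactly that, in the critical case~$a_{11} = a_{22}$, the inter-payoff bias is only second order: it acts through the local density profile behind the fronts rather than through a direct drift of the single interface edge, so the coupling cannot track that edge alone but must control a whole window around the interface. Establishing that this window coupling is monotone and has a strictly positive rightward drift, uniformly over the admissible payoffs and in particular in the degenerate limit~$a_{12} \downarrow a_{21}$ with~$a_{11} = a_{22}$, is the delicate point on which the argument hinges.
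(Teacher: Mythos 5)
Your setup agrees with the paper's: the separation property in $d=1$, the reduction to showing that the interface escapes to $+\infty$, and the identification of the driving mechanism (a $1$ in contact with a $2$ refills an adjacent hole at rate $\lambda e^{a_{12}/2}/2$, while the mirror event for the $2$s occurs at the slower rate $\lambda e^{a_{21}/2}/2$, so $a_{12}>a_{21}$ makes the $1$s denser just behind their front). But the proposal stops exactly where the proof has to start: you yourself flag that converting this second-order bias into a drift of the interface requires controlling a whole window around the interface, call this ``the delicate point on which the argument hinges,'' and do not supply the argument. The paper closes precisely this gap with a renewal decomposition and a stochastic-domination argument. It introduces stopping times $\tau_i$ (fronts in contact) and $\sigma_i$ (fronts separated), and compares the law $\mu_t^1$ of the $1$s seen from the right edge $R_t$ with the mirror image of the law $\mu_t^2$ of the $2$s seen from the left edge $L_t$. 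The strict inequality $\ep_{12}>\ep_{21}$ between the double-arrow rates yields the strict domination $\mu_t^1 > s(\mu_t^2)$, hence $E|L^2_t-L_t| > E|R_t-R^2_t|$ for the second-nearest occupied sites behind each front; since each boundary player is the first to die with probability $1/2$, the midpoint $X_t=(R_t+L_t)/2$ gains $\tfrac14\,(E|L^2-L|-E|R-R^2|)>0$ in expectation per excursion (Lemma~\ref{lem:space}), and a separate lemma bounds $E(\tau_{i+1}-\tau_i)$ uniformly using the Liggett--Steif domination of the upper invariant measure by a product measure together with the shape theorem (Lemma~\ref{lem:time}). Without some version of this two-lemma structure, or another quantitative control of the window, the argument is incomplete.

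Two further cautions. First, your proposed reduction to the ``worst case'' $a_{11}=a_{22}$, $a_{12}=a_{21}+\ep$ by monotonicity in the payoffs is not justified: monotonicity in $a_{12}$ is exactly the non-attractive direction the paper warns about (increasing $a_{12}$ helps the $1$s only through the presence of $2$s), and the coupling of Proposition~\ref{prop:coupling} requires $a_{12}\le 0$ on the dominated side, so it does not apply here; this step would itself need a proof, and it is also unnecessary. Second, the ``biased random walk of the interface edge'' with recapture probability $e^{a_{11}/2}/(e^{a_{11}/2}+e^{a_{22}/2})$ is only a heuristic---the interface edge is not Markov because the holes behind the fronts matter---and, as you note, it degenerates exactly in the case $a_{11}=a_{22}$ that the theorem must cover, so it cannot carry any part of the proof.
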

\noindent
 The proof relies on a coupling with the contact process to control the interface between the~1s and the~2s.
 When the~1s and the~2s are not in contact, they evolve according to two independent supercritical contact processes so the next time they come in contact is finite on average.
 In addition, because~$a_{11} \geq a_{22}$, the configuration of~1s is stochastically larger than or equal to the mirror image of the configuration of~2s.
 When the~1s and the~2s are in contact, because~$a_{12} > a_{21}$, the rightmost~1 has a higher birth rate than but the same death rate as the leftmost~2 so at the time they separate~(when one of them dies), the configuration of~1s is stochastically strictly larger than the mirror image of the configuration~2s.
 This shows the existence of a positive drift implying that the interface converges to infinity, which proves the theorem.
 For pictures of the phase structure of the process suggested by numerical simulations and a summary of our results, see Figures~\ref{fig:phase1} where the inter payoffs are equal to zero and Figure~\ref{fig:phase2} where the intra payoffs are equal to zero.
 The rest of the paper is devoted to the construction of the process and the proof of Theorems~\ref{th:a11large}--\ref{th:a12}.


\section{Graphical representation}
\label{sec:coupling}
 This section shows how to construct the process graphically from a collection of independent Poisson processes following an idea of Harris~\cite{harris_1978}.
 To generate the potential births, we let
 $$ M = \lambda \exp (0 \vee a_{11} \vee a_{12} \vee a_{21} \vee a_{22}), $$
 which represents the largest possible birth rate of a player over all possible configurations.
 Then, for each oriented edge~$\vec{xy}$, we draw an arrow~$x \to y$ at the times~$B_n (\vec{xy})$ of a rate~$M / 2d$ Poisson process to indicate a potential birth from site~$x$ to site~$y$.
 More precisely, to each
 $$ (x, B_n (\vec{xy})) \to (y, B_n (\vec{xy})), \quad \hbox{we attach} \quad U_n (\vec{xy}) = \uniform (0, M), $$
 and, letting~$t = B_n (\vec{xy})$, call this arrow~$i$-open if
 $$ U_n (\vec{xy}) \leq \lambda \exp (a_{i1} f_1 (x, \xi_{t-}) + a_{i2} f_2 (x, \xi_{t -})). $$
 Finally, we assume that, if there is a type~$i$ player at the tail~$x$ and the head~$y$ is empty at time~$t-$, then site~$y$ becomes occupied by a type~$i$ player if and only if the arrow is~$i$-open.
 To generate the potential deaths, for each~$x \in \Z^d$, we put a cross at~$x$ at the times~$D_n (x)$ of a rate one Poisson process to indicate that a player at site~$x$ is killed at time~$t = D_n (x)$.
 Using this graphical representation, we can compare processes with different parameters.
 More precisely, let
 $$ \begin{array}{rcl}
    \xi_t & \n = \n & \hbox{process with parameters} \ \lambda, a_{11}, a_{12}, a_{21}, a_{22}, \vspace*{4pt} \\
    \bar \xi_t & \n = \n & \hbox{process with parameters} \ \lambda, \bar a_{11}, \bar a_{12}, \bar a_{21}, \bar a_{22}, \end{array} $$
 and for~$i = 1, 2$, denote by
 $$ \xi_t^i = \{x \in \Z^d : \xi_t (x) = i \} \quad \hbox{and} \quad \bar \xi_t^i = \{x \in \Z^d : \bar \xi_t (x) = i \} $$
 the set of sites occupied by the type~$i$ players at time~$t$ in each of the two processes.
\begin{proposition}
\label{prop:coupling}
 Assume that
 $$ \bar a_{11} \geq a_{11} \geq 0, \quad \bar a_{12} \geq 0 \geq a_{12}, \quad \bar a_{21} \leq 0 \leq a_{21}, \quad \bar a_{22} \leq 0 \leq a_{22}. $$
 Then, there exists a coupling~$(\xi_t, \bar \xi_t)$ such that
 $$ \bar \xi_0^1 \supset \xi_0^1 \quad \hbox{and} \quad \bar \xi_0^2 \subset \xi_0^2 \quad \Longrightarrow \quad \bar \xi_t^1 \supset \xi_t^1 \quad \hbox{and} \quad \bar \xi_t^2 \subset \xi_t^2 \quad \hbox{for all} \ t \geq 0. $$
\end{proposition}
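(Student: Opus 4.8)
The plan is to construct both processes $\xi_t$ and $\bar\xi_t$ from the \emph{same} graphical representation --- the same birth arrows $B_n(\vec{xy})$ carrying the same marks $U_n(\vec{xy})$, and the same death crosses $D_n(x)$ --- and to prove by induction on the (a.s.\ locally finite) set of Poisson events that the desired order is preserved. It is convenient to encode the relation ``$\bar\xi^1\supset\xi^1$ and $\bar\xi^2\subset\xi^2$'' through the set of admissible site-pairs
$$ \mathcal{A} = \{(0,0),(0,1),(1,1),(2,0),(2,1),(2,2)\}, $$
so that the relation holds if and only if $(\xi(x),\bar\xi(x))\in\mathcal{A}$ for every $x$; equivalently, the forbidden pairs are $(1,0)$, $(0,2)$, and $(1,2)$. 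The first thing I would record is that whenever $(\xi_{t-}(x),\bar\xi_{t-}(x))\in\mathcal{A}$ for all $x$, the neighborhood densities satisfy $f_1(x,\bar\xi_{t-})\ge f_1(x,\xi_{t-})$ and $f_2(x,\bar\xi_{t-})\le f_2(x,\xi_{t-})$ for every $x$, since $\bar\xi^1\supset\xi^1$ can only increase the count of type-$1$ neighbors and $\bar\xi^2\subset\xi^2$ can only decrease the count of type-$2$ neighbors.

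Deaths are harmless: a cross at $x$ removes whatever player sits there in each process, and one checks directly that this maps every pair of $\mathcal{A}$ into $\mathcal{A}$. The substance is in the births. A birth arrow $\vec{xy}$ changes only the head $y$, and only when it is empty, turning it into the type of the tail in the corresponding process provided the arrow is open for that type. Writing the type-$i$ opening threshold at the tail as $\theta_i(x,\xi_{t-}) = \lambda\exp(a_{i1}f_1(x,\xi_{t-})+a_{i2}f_2(x,\xi_{t-}))$ and similarly $\bar\theta_i$ for the barred process, I would establish the two key inequalities
$$ \bar\theta_1(x,\bar\xi_{t-}) \ge \theta_1(x,\xi_{t-}) \quad\text{and}\quad \bar\theta_2(x,\bar\xi_{t-}) \le \theta_2(x,\xi_{t-}). $$
The first follows from $\bar a_{11}\ge a_{11}\ge 0$ and $f_1(\bar\xi_{t-})\ge f_1(\xi_{t-})\ge 0$, which give $\bar a_{11}f_1(\bar\xi_{t-})\ge a_{11}f_1(\xi_{t-})$, together with $\bar a_{12}\ge 0\ge a_{12}$ and $f_2\ge 0$, which give $\bar a_{12}f_2(\bar\xi_{t-})\ge 0\ge a_{12}f_2(\xi_{t-})$. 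The second is even simpler: the sign conditions $a_{21},a_{22}\ge 0\ge \bar a_{21},\bar a_{22}$ force the barred exponent to be $\le 0$ and the unbarred one to be $\ge 0$. In words, raising $a_{11}$ and $a_{12}$ while lowering $a_{21}$ and $a_{22}$ makes the barred process more eager to place $1$s and more reluctant to place $2$s at every comparison.

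With these inequalities in hand, the induction step is a finite case check over the tail pair and head pair in $\mathcal{A}$, using the common mark $U_n(\vec{xy})$. The only ways to create a forbidden head pair are $(0,0)\to(1,0)$ and $(0,0)\to(0,2)$: but a type-$1$ birth in the unbarred process requires $\xi(x)=1$, hence $\bar\xi(x)=1$ by admissibility, and then $U_n\le\theta_1\le\bar\theta_1$ forces the same birth in the barred process, ruling out $(1,0)$, while the symmetric argument using $\bar\theta_2\le\theta_2$ rules out $(0,2)$; and $(1,2)$ is impossible because it would require the two tails to be simultaneously of type $1$ and of type $2$, which is not admissible. All remaining combinations, including the ``mixed'' tails $(2,1)$ and $(0,1)$ where the two processes may place players of different types, are checked to land back in $\mathcal{A}$. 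I expect the threshold comparison --- controlling the density-dependent rates that couple the two processes only through their \emph{own} local configurations --- to be the real point, whereas the enumeration of cases, though tedious, is routine once the two inequalities above are in place.
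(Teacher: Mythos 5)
Your proposal is correct and follows essentially the same route as the paper: both construct the two processes from a common graphical representation with shared uniform marks, and both reduce the inductive step to the two threshold inequalities $\bar\theta_1(x,\bar\xi_{t-})\ge\theta_1(x,\xi_{t-})$ and $\bar\theta_2(x,\bar\xi_{t-})\le\theta_2(x,\xi_{t-})$, proved by exactly the same sign analysis as the paper's displays~\eqref{eq:coupling-1}--\eqref{eq:coupling-2}. Your bookkeeping via the admissible-pair set $\mathcal{A}$ is just a more systematic version of the paper's informal case check, so no substantive difference.
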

\begin{proof}
 We can couple the two processes by constructing them simultaneously from the graphical representation described above with
 $$ \begin{array}{rcl}
      M & \n = \n & \lambda \exp (0 \vee a_{11} \vee a_{12} \vee a_{21} \vee a_{22} \vee \bar a_{11} \vee \bar a_{12} \vee \bar a_{21} \vee \bar a_{22}) \vspace*{4pt} \\
        & \n = \n & \lambda \exp (\bar a_{11} \vee \bar a_{12} \vee a_{21} \vee a_{22}). \end{array} $$
 Assume that the two inclusions are true until right before time~$t = B_n (\vec{xy})$. Then,
\begin{equation}
\label{eq:coupling-1}
\begin{array}{l}
\lambda \exp (\bar a_{11} f_1 (x, \bar \xi_{t-}) + \bar a_{12} f_2 (x, \bar \xi_{t -})) \geq \lambda \exp (\bar a_{11} f_1 (x, \bar \xi_{t-})) \vspace*{4pt} \\ \hspace*{50pt} \geq
\lambda \exp (a_{11} f_1 (x, \xi_{t-})) \geq \lambda \exp (a_{11} f_1 (x, \xi_{t-}) + a_{12} f_2 (x, \xi_{t-})). \end{array}
\end{equation}
 Similarly, for the type~2 players, we have
\begin{equation}
\label{eq:coupling-2}
\begin{array}{l}
\lambda \exp (\bar a_{21} f_1 (x, \bar \xi_{t-}) + \bar a_{22} f_2 (x, \bar \xi_{t -})) \vspace*{4pt} \\ \hspace*{50pt} \leq
\lambda \leq \lambda \exp (a_{21} f_1 (x, \xi_{t-}) + a_{22} f_2 (x, \xi_{t -})). \end{array}
\end{equation}
 The inclusion~$\bar \xi_{t-}^1 \supset \xi_{t-}^1$ and the inequalities in~\eqref{eq:coupling-1} show that, if
 $$ \xi_{t-} (x) = 1, \quad \xi_{t-} (y) = 0, \quad (x, t) \to (y, t) \ \hbox{is 1-open for the process} \ \xi, $$
 in which case~$y$ becomes occupied by a 1, the same holds for the process~$\bar \xi$ unless there is already a type~1 player at site~$y$, so the inclusion still holds at time~$t$. 
 Similarly, the inclusion~$\bar \xi_{t-}^2 \subset \xi_{t-}^2$ and the inequalities in~\eqref{eq:coupling-2} show that, if
 $$ \bar \xi_{t-} (x) = 2, \quad \bar \xi_{t-} (y) = 0, \quad (x, t) \to (y, t) \ \hbox{is 2-open for the process} \ \bar \xi, $$
 in which case~$y$ becomes occupied by a 2, the same holds for the process~$\xi$ unless there is already a type~2 player at site~$y$, so the inclusion still holds at time~$t$.
 Since crosses have the same effect on both processes, the two inclusions also remain true if~$t$ is the time of a cross.
\end{proof}


\section{Proof of Theorem~\ref{th:a11large}}
\label{sec:a11large}
 This section is devoted to the proof of Theorem~\ref{th:a11large}, which states that the birth rate~$\lambda > 0$ and the other three payoffs being fixed, strategy~1 wins whenever~$a_{11}$ is sufficiently large.
 The proof is based on a standard block construction.
 Define~$\Lambda_- = \{0, 1 \}^d$ and the cross-shaped region
 $$ \begin{array}{c} \Lambda_+ = \{x \in \Z^d : \min_{y \in \Lambda_-} \norm{x - y}_2 \leq 1 \}. \end{array} $$
 See Figure~\ref{fig:cross} for a picture in dimension two.
 The first step to prove the theorem is to show that, starting with only type~1 players in~$\Lambda_-$, for~$a_{11}$ large, these~1s will expand to occupy~$\Lambda_+$ and exclude the~2s with probability close to one.
 Regardless of their birth rate, the~1s can only spread if the~2s around die to give them space, and they can only persist if they are not subject to too many deaths, so the first step is to control the number of crosses/death marks in the cross-shaped region~$\Lambda_+$.
 More precisely, we have the following lemma where
 $$ N_x (0, T) = \max \{n : D_n (x) < T \} = \hbox{number of crosses at~$x$ by time~$T$}. $$
\begin{lemma}
\label{lem:death}
 For all~$\ep > 0$, there exists~$T = T (\ep)$ and~$N = N (\ep, T)$ such that
 $$ P (0 < N_x (0, T) < N \ \hbox{for all} \ x \in \Lambda_+) \geq 1 - \ep / 4. $$
\end{lemma}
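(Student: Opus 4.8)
The plan is to exploit the simple structure of the death marks in the graphical representation: for each site~$x$, the count~$N_x (0, T)$ records the points of a rate-one Poisson process on the interval~$(0, T)$, so~$N_x (0, T)$ is a Poisson random variable with mean~$T$, and these random variables are mutually independent across the finitely many sites~$x \in \Lambda_+$. The event in the statement asks that every such count be simultaneously positive and strictly less than~$N$, so its complement is contained in the union of the ``too few deaths'' event~$\{N_x (0, T) = 0 \ \text{for some} \ x \in \Lambda_+\}$ and the ``too many deaths'' event~$\{N_x (0, T) \geq N \ \text{for some} \ x \in \Lambda_+\}$. I would bound these two pieces separately and combine them with a union bound, each contributing at most~$\ep / 8$.

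For the first piece, since~$P (N_x (0, T) = 0) = e^{-T}$, a union bound over~$\Lambda_+$ gives
$$ P (N_x (0, T) = 0 \ \text{for some} \ x \in \Lambda_+) \leq |\Lambda_+| \, e^{-T}, $$
and because~$\Lambda_+$ is a fixed finite set whose cardinality depends only on the dimension~$d$, I would choose~$T = T (\ep)$ large enough that the right-hand side is at most~$\ep / 8$. With~$T$ now fixed, I would turn to the second piece. Using a standard Poisson tail bound---for instance the Chernoff estimate~$P (\poisson (T) \geq N) \leq e^{-T} (e T / N)^N$ for~$N > T$, or simply the fact that the tail of a fixed Poisson law tends to zero as~$N \to \infty$---a union bound yields
$$ P (N_x (0, T) \geq N \ \text{for some} \ x \in \Lambda_+) \leq |\Lambda_+| \, P (\poisson (T) \geq N), $$
which can be made at most~$\ep / 8$ by taking~$N = N (\ep, T)$ sufficiently large. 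Adding the two bounds shows that the complement of the desired event has probability at most~$\ep / 4$, which is exactly the claim.

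There is no substantial obstacle here: the whole statement reduces to elementary Poisson estimates once one records the independence and the Poisson law of the death counts. The only point requiring care is the order of the quantifiers, which is precisely what the notation~$N = N (\ep, T)$ encodes. One must fix~$T$ first, so as to guarantee that each site of~$\Lambda_+$ is hit by at least one death mark with high probability, and only afterward choose~$N$, since enlarging~$T$ worsens the upper tail and hence forces~$N$ to grow with~$T$. This dependence is harmless because~$\Lambda_+$ is finite, so both union bounds involve only a bounded number of terms and the two thresholds can be selected in succession without conflict.
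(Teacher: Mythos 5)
Your proposal is correct and follows essentially the same route as the paper: the authors also fix $T$ first so that $e^{-T}\leq \ep/(8\card(\Lambda_+))$, then choose $N$ so that the upper Poisson tail is below the same threshold, and conclude with a union bound over the finitely many sites of $\Lambda_+$, giving total error $\ep/4$. The only cosmetic difference is that the paper phrases the two contributions via a single constant $a=\ep/(8\card(\Lambda_+))$ rather than splitting into two $\ep/8$ budgets.
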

\begin{proof}
 To begin with, we let~$\ep > 0$ and define
 $$ a = \frac{\ep}{8 \card (\Lambda_+)} = \frac{\ep}{8 (2^d + 2d \times 2^{d - 1})} = \frac{\ep}{(d + 1) \,2^{d + 3}} > 0. $$
 Because the random variable~$N_x (0, T)$ is Poisson distributed with mean~$T$, we can choose the parameter~$T$ sufficiently large that
\begin{equation}
\label{eq:death-1}
  P (N_x (0, T) = 0) = \exp (-T) \leq a.
\end{equation}
 Now, with time~$T < \infty$ being fixed, there exists~$N < \infty$ sufficiently large such that
\begin{equation}
\label{eq:death-2}
\begin{array}{c}  P (N_x (0, T) \geq N) = \exp (-T) \sum_{k \geq N} T^k / k! \leq a. \end{array}
\end{equation}
 Combining~\eqref{eq:death-1} and~\eqref{eq:death-2}, we deduce that
 $$ \begin{array}{l}
      P (N_x (0, T) \not \in (0, N) \ \hbox{for some} \ x \in \Lambda_+) \leq
    \sum_{x \in \Lambda_+} P (N_x (0, T) \not \in (0, N)\blue{)} \vspace*{4pt} \\ \hspace*{25pt} \leq
    \sum_{x \in \Lambda_+} P (N_x (0, T) = 0) + \sum_{x \in \Lambda_+} P (N_x (0, T) \geq N)) \leq 2a \card (\Lambda_+) = \ep / 4, \end{array} $$
 which proves the lemma.
\end{proof}
\noindent
 Now that we have some control over the number of death marks, we can prove that, starting with only type~1 players in~$\Lambda_-$, with probability close to one, these players will expand to~$\Lambda_+$ in less than~$T$ units of time and exclude the~2s from~$\Lambda_+$ for another~$T$ units of time.
\begin{figure}[t!]
\label{fig:cross}
\centering
\scalebox{0.50}{\input{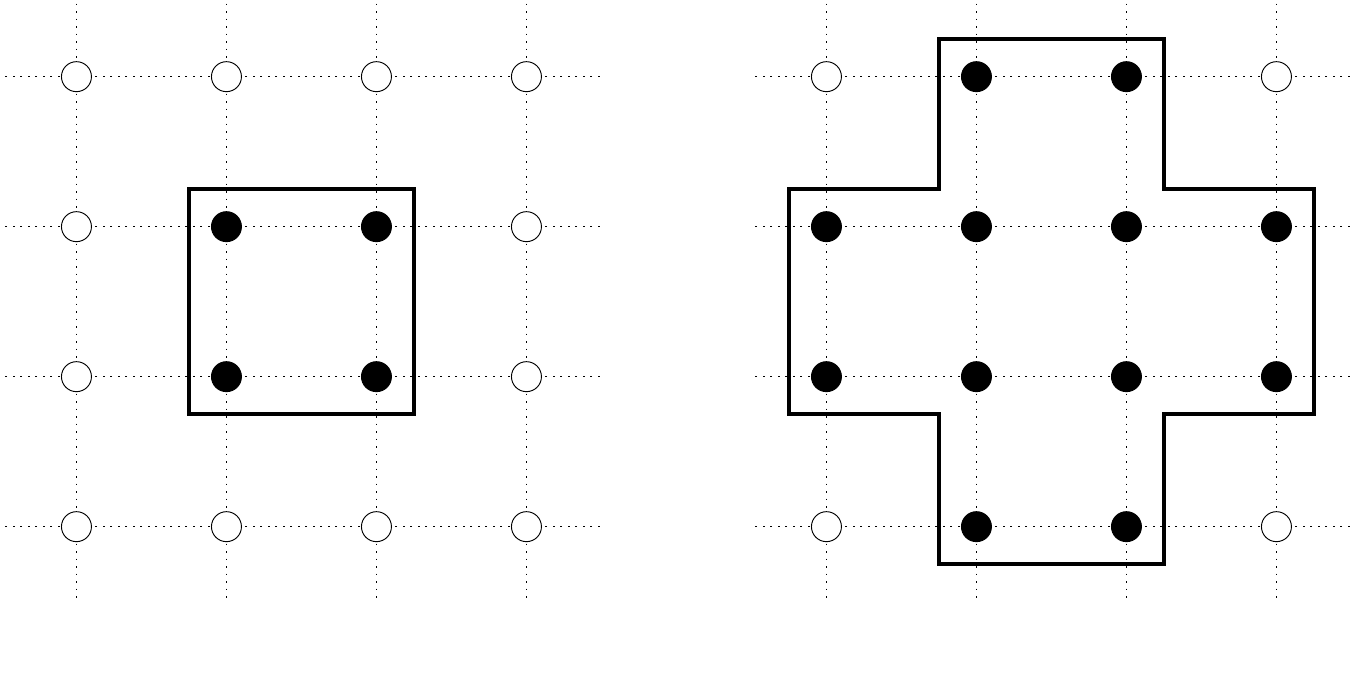_t}}
\caption{\upshape{
 Picture of the region~$\Lambda_-$ on the left and picture of the region~$\Lambda_+$ in the middle in two dimensions, and illustration of the events in Lemma~\ref{lem:invade} on the right in one dimension.}}
\end{figure}
\begin{lemma}
\label{lem:invade}
 For all~$\ep, \lambda > 0$ and~$- \infty < a_{12}, a_{21}, a_{22} < \infty$,
 $$ P (\Lambda_+ \subset \xi_T^1 \ \hbox{and} \ \Lambda_+ \cap \xi_t^2 = \varnothing \ \hbox{for all} \ t \in [T, 2T) \,| \,\Lambda_- \subset \xi_0^1) \geq 1 - \ep $$
 for all~$a_{11} < \infty$ sufficiently large.
\end{lemma}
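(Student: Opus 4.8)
The plan is to condition on the death marks and then exploit that, for $a_{11}$ large, the birth rate of a type~1 player adjacent to another type~1 player is enormous compared with every competing rate in the system. Concretely, if $y \in \Lambda_+$ is empty and has a type~1 neighbor $z$ that itself has at least one type~1 neighbor, then the rate of a $1$-open arrow $z \to y$ is at least
\[
r \;=\; \tfrac{\lambda}{2d}\exp\!\big(\tfrac{a_{11}}{2d} - |a_{12}|\big),
\]
which tends to $+\infty$ as $a_{11}\to\infty$, whereas every other rate that can terminate such an empty window stays bounded uniformly in $a_{11}$: deaths occur at rate one per site, and a $2$-open arrow into $y$ occurs at rate at most $\rho := \lambda \exp(|a_{21}| + |a_{22}|)$ per incoming edge, a quantity depending only on the fixed payoffs $a_{12}, a_{21}, a_{22}$. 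This separation of scales is the engine of the whole argument.

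First I would fix the death structure. Applying Lemma~\ref{lem:death} (with $2T$ in place of $T$ and an enlarged $N$, which only changes constants), together with the fact that a rate-one Poisson process puts no point in a short terminal window $(T-\tau,T)$ with probability close to one, I obtain an event $G$ with $P(G)\geq 1-\ep/2$ on which every $x\in\Lambda_+$ has at least one but at most $N$ death marks in $[0,2T)$, no death mark lies in $(T-\tau,T)$, and no two death marks in $\Lambda_-$ fall within time $\tau$ of one another. On $G$ the at-least-one-death condition guarantees that any type~2 player initially sitting at an arm site of $\Lambda_+\setminus\Lambda_-$ is removed, clearing room for the $1$s, while the at-most-$N$ condition bounds the total number of empty windows that ever need refilling by a constant $K=K(N,\card(\Lambda_+))$.

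Next, conditionally on $G$ and on the realization of all death marks, I would run a race argument on the independent birth arrows. To each empty window---a maximal time interval during which a fixed site $y\in\Lambda_+$ is empty and possesses an \emph{active} type~1 source---I associate the event that it is \emph{won} by the $1$s, meaning a $1$-open arrow from a source reaches $y$ before either a $2$-open arrow reaches $y$ or all of $y$'s sources die. Since the winning rate is $\geq r$ while the competing rates sum to at most $\rho + 2d$, each window is won with conditional probability at least $1-c/r$ for a constant $c$ independent of $a_{11}$, and for $r$ large the winning arrow arrives within time $\tau$ with probability $\geq 1-e^{-r\tau}$. As there are at most $K$ windows on $G$, a union bound yields a birth-favorable event $H$ with $P(H\mid G)\geq 1-\ep/2$ once $a_{11}$, hence $r$, is taken large enough; then $P(G\cap H)\geq 1-\ep$. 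On $G\cap H$ one checks chronologically that $\Lambda_-$ stays free of $2$s and essentially full (each hole refilled within $\tau$, and $G$ forbids two simultaneous holes), so every arm site always has an active source in $\Lambda_-$, gets filled by a $1$ within $\tau$ after its last death (which is $\leq T-\tau$), giving $\Lambda_+\subset\xi_T^1$, and during $[T,2T)$ every empty window is again won by a $1$ before any $2$ enters, giving $\Lambda_+\cap\xi_t^2=\varnothing$.

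The hard part will be the dependency management hidden in the race argument: the lower bound $r$ on the refill rate is only valid while the neighboring sources are alive and of type~1, so the races cannot be analyzed in isolation but must be organized around the invariant that $\Lambda_-$ remains essentially full. Establishing this invariant is itself a consequence of the races, so the clean way to proceed is an induction on the finitely many death times in $[0,2T)$, carrying forward the statement ``$\Lambda_-$ has at most one hole, which is refilled by a $1$ within $\tau$'' and verifying that each new death keeps the configuration in the regime where $r$ applies. Making this bookkeeping rigorous---rather than the size estimates, which are routine given the $r\to\infty$ separation of scales---is where the real work lies.
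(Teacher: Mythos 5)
Your proposal follows essentially the same route as the paper's proof: control the death marks via Lemma~\ref{lem:death}, then run a race between the $1$-open arrows from $\Lambda_-$, whose rate $M_1 = \frac{\lambda}{2d}\exp(\frac{a_{11}}{2d}+(2d-1)\frac{a_{12}\wedge 0}{2d})$ diverges as $a_{11}\to\infty$, and the uniformly bounded competing clocks ($2$-open arrows and deaths), finishing with a union bound over the at most $N\card(\Lambda_+)$ races and the product of the two halves $[0,T]$ and $[T,2T)$. The only organizational difference is that the paper enforces the ``at most one hole at a time'' invariant by including the next death mark in $\Lambda_+$ as a competing clock inside each race (the $\card(\Lambda_+)$ term in \eqref{eq:invade-3}), rather than through your additional conditions on the death-mark configuration and the deterministic refill window~$\tau$.
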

\begin{proof}
 To begin with, note that, when~$a_{11} \geq a_{12} \vee 0$ and site~$x$ has at least one type~1 neighbor, the rate of~1-open arrows~$x \to y$ is larger than
\begin{equation}
\label{eq:invade-1}
  M_1 = M_1 (a_{11}, a_{12}) = \frac{\lambda}{2d} \,\exp \bigg(\frac{a_{11}}{2d} + (2d - 1) \,\frac{a_{12} \wedge 0}{2d} \bigg),
\end{equation}
 while the rate of~2-open arrows pointing at~$y$ is smaller than
\begin{equation}
\label{eq:invade-2}
  M_2 = M_2 (a_{21}, a_{22}) = \lambda \exp (0 \vee a_{21} \vee a_{22}).
\end{equation}
 To shorten the notation, we also let
 $$ B_{N, T} = \{0 < N_x (0, T) < N \ \hbox{for all} \ x \in \Lambda_+ \}. $$
 Now, observe that each site in~$\Lambda_+$ has at least one neighbor in~$\Lambda_-$ and that, if~$N_x (0, T) \neq 0$, site~$x$ is empty at least once before time~$T$.
 In particular, starting with~$\Lambda_-$ fully occupied by~1s, to prove that~$\Lambda_+$ is fully occupied by~1s at time~$T$, it suffices to prove that, between any two consecutive death marks in~$\Lambda_+$, there is a~1-open arrow from~$\Lambda_-$ to the location, say~$y$, of the first death mark before any~2-open arrows from~$\Lambda_+$ to site~$y$.
 Thus, the lower/upper bounds~\eqref{eq:invade-1} and~\eqref{eq:invade-2}, and the fact that death marks in~$\Lambda_+$ appear at rate~$\card (\Lambda_+)$, imply that
\begin{equation}
\label{eq:invade-3}
\begin{array}{rcl}
  P (\Lambda_+ \not \subset \xi_T^1 \,| \,B_{N, T}) & \n \leq \n &
  N \card (\Lambda_+) (M_2 + \card (\Lambda_+)) / (M_1 + M_2 + \card (\Lambda_+)) \vspace*{4pt} \\ \hspace*{40pt} & \n \leq \n &
  N \card (\Lambda_+) (M_2 + \card (\Lambda_+)) / M_1 \leq \ep / 4 \end{array}
\end{equation}
 for all~$a_{11}$ sufficiently large since~$M_1 = M_1 (a_{11}, a_{12}) \to \infty$ as~$a_{11} \to \infty$, while~$N, M_2 (a_{21}, a_{22})$ and the spatial dimension are fixed.
 Using~\eqref{eq:invade-3} and Lemma~\ref{lem:death}, we deduce that
\begin{equation}
\label{eq:invade-4}
  P (\Lambda_+ \subset \xi_T^1) \geq P (\Lambda_+ \subset \xi_T^1 \,| \,B_{N, T}) P (B_{N, T}) \geq (1 - \ep / 4)^2 \geq 1 - \ep / 2.
\end{equation}
 Repeating the exact same reasoning also implies that
\begin{equation}
\label{eq:invade-5}
  P (\Lambda_+ \cap \xi_t^2 = \varnothing \ \hbox{for all} \ t \in (T, 2T) \,| \,\Lambda_+ \subset \xi_T^1) \geq 1 - \ep / 2.
\end{equation}
 Finally, combining~\eqref{eq:invade-4} and~\eqref{eq:invade-5}, we conclude that, starting with~$\Lambda_- \subset \xi_0^1$,
 $$ \begin{array}{l}
      P (\Lambda_+ \subset \xi_T^1 \ \hbox{and} \ \Lambda_+ \cap \xi_t^2 = \varnothing \ \hbox{for all} \ t \in (T, 2T)) \vspace*{4pt} \\ \hspace*{40pt} =
      P (\Lambda_+ \cap \xi_t^2 = \varnothing \ \hbox{for all} \ t \in (T, 2T) \,| \,\Lambda_+ \subset \xi_T^1) P (\Lambda_+ \subset \xi_T^1) \vspace*{4pt} \\ \hspace*{40pt} \geq
        (1 - \ep / 2)^2 \geq 1 - \ep. \end{array} $$
 This proves the lemma.
\end{proof}
\noindent
 To complete the proof of the theorem, the last step is to compare the process properly rescaled in space and time with oriented site percolation. Let
 $$ \Lat = \{(m, n) \in \Z^d \times \N : m_1 + \cdots + m_d + n \ \hbox{is even} \}, $$
 and turn~$\Lat$ into a directed graph~$\vec{\Lat}$ by placing an edge
 $$ (m, n) \to (m', n') \quad \hbox{if and only if} \quad \norm{m_i' - m_i} = 1 \quad \hbox{and} \quad n' = n + 1. $$
 Fix~$\ep > 0$, let~$T = T (\ep)$ as in Lemma~\ref{lem:death}, and call~$(m, n) \in \Lat$ a \textit{good site} if
\begin{itemize}
\item $E_{m, n} = \{m + \Lambda_- \subset \Lambda_{nT}^1  \}$ and \vspace*{4pt}
\item $F_{m, n} = \{(m + \Lambda_-) \cap \xi_t^2 = \varnothing \ \hbox{for all} \ t \in [nT, (n + 1) T) \}$
\end{itemize}
 both occur.
 Note that, accounting also for translation invariance of the graphical representation in space and time, the conclusion of Lemma~\ref{lem:invade} can be written as
 $$ P (E_{m', n'} \cap F_{m', n'} \ \hbox{for all} \ (m', n') \leftarrow (m, n) \,| \,E_{m, n}) \geq 1 - \ep. $$
 In particular, it follows from~\cite[Theorem~A.4]{durrett_1995} that the set of good sites dominates stochastically the set of wet sites in an oriented site percolation process on~$\vec{\Lat}$ with parameter~$1 - \ep$ and finite range of dependence.
 In view of the events~$E_{m, n}$,
 choosing~$\ep > 0$ small to ensure percolation of the open sites implies survival of the type~1 players.
 However, because there is a positive density~$\ep > 0$ of closed sites, and the corresponding space-time blocks may contain type~2 players, this does not show extinction of the~2s.
 To also prove that the~2s die out, we use a percolation result in~\cite{durrett_1992} which shows that, when~$\ep$ is small enough, not only the open sites percolate but also the closed sites do not percolate.
 In view of the events~$F_{m, n}$, and the fact that
 $$ \bigcup_{(m, n) \in \Lat} ((m + \Lambda_-) \times [nT, (n + 1) T)) = \Z^d \times \R_+, $$
 the presence of a type~2 player after time~$nT$ in the interacting particle system implies the presence of a directed path of closed sites of length at least~$n$ in the percolation process.
 This shows extinction of the~2s and completes the proof of Theorem~\ref{th:a11large}.


\section{Proof of Theorem~\ref{th:a11>0}}
\label{sec:a11>0}
 This section is devoted to the proof of Theorem~\ref{th:a11>0}, which states that when~$\lambda$ is supercritical for the basic contact process and~$a_{12} \geq 0 \geq a_{21}, a_{22}$, strategy~1 wins whenever~$a_{11} > 0$.
 The proof is based on coupling arguments and duality-like techniques.
 Note that, in view of Proposition~\ref{prop:coupling}, it suffices to prove that the~1s win for the process~$\xi_t$ with
 $$ \lambda > \lambda_c, \quad a_{11} > 0, \quad \hbox{and} \quad a_{12} = a_{21} = a_{22} = 0, $$
 in which case type~2 players give birth at rate~$\lambda$ regardless of their surrounding environment.
 To further simplify the proof of the theorem, we will first deal instead with the process~$\zeta_t$ constructed from the following graphical representation:
\begin{itemize}
\item
 For each site~$x$, put a death mark~$\times$ at site~$x$ at the times of a rate one Poisson process to indicate the death of a player of either type at~$x$. \vspace*{4pt}
\item
 For each site~$x$ and each neighbor~$y$, draw a single arrow~$x \to y$ at the times of a rate~$\lambda / 2d$ Poisson process to indicate that if site~$x$ is occupied by a player of either type and site~$y$ is empty then the player at~$x$ gives birth through the arrow. \vspace*{4pt}
\item
 For each site~$x$ and each ordered pair~$(y, z)$ of neighbors of~$x$, draw a double arrow~$z \to x \to y$ at the times of a Poisson process with rate
 $$ \ep = \frac{\lambda}{(2d)^2} \,\bigg(\exp \bigg(\frac{a_{11}}{2d} \bigg) - 1 \bigg) > 0 $$
 to indicate that if sites~$x$ and~$z$ are both occupied by a type~1 player and site~$y$ is empty then the player at site~$x$ gives birth onto site~$y$.
\end{itemize}
 The reason for introducing this process is that it can be studied using duality-like techniques.
 In addition, the parameters~$\lambda > \lambda_c$ and~$a_{11} > 0$ being fixed, if the~1s win for~$\zeta_t$ then the~1s also win for the process~$\xi_t$, which is a direct consequence of the following proposition.
\begin{proposition}
\label{prop:more-coupling}
 There exists a coupling~$(\xi_t, \zeta_t)$ such that
\begin{equation}
\label{eq:xi-zeta}
\zeta_0^1 \subset \xi_0^1 \quad \hbox{and} \quad \zeta_0^2 \supset \xi_0^2 \quad \Longrightarrow \quad \zeta_t^1 \subset \xi_t^1 \quad \hbox{and} \quad \zeta_t^2 \supset \xi_t^2 \quad \hbox{for all} \ t \geq 0.
\end{equation}
\end{proposition}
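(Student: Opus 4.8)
The plan is to realize both processes on one probability space through a single Harris-type graphical representation and to verify that a suitable partial order is preserved by every elementary event. Order the single-site state space $\{0,1,2\}$ by $2 \prec 0 \prec 1$ and extend it coordinatewise, writing $\zeta \preceq \xi$ when $\zeta(x) \preceq \xi(x)$ at every site. A direct inspection shows that $\zeta \preceq \xi$ is exactly the conjunction $\zeta^1 \subset \xi^1$ and $\zeta^2 \supset \xi^2$: the value $1$ is maximal, so $\zeta(x)=1$ forces $\xi(x)=1$, while $2$ is minimal, so $\xi(x)=2$ forces $\zeta(x)=2$. It therefore suffices to build a coupling under which $\zeta_0 \preceq \xi_0$ propagates to $\zeta_t \preceq \xi_t$ for all $t \geq 0$.

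For the coupling I would share as much randomness as possible. Let the death marks (rate one at each site) be common; a death sets its site to $0$ in both processes, and since $0 \succeq 0$ this preserves $\preceq$. Let the single arrows (rate $\lambda/2d$ on each oriented edge) be common as well, each acting in each process with the type of the player at its tail in that process; because $a_{12}=a_{21}=a_{22}=0$, these arrows already produce the full type-$2$ birth rate $\lambda/2d$ in both processes and a base type-$1$ rate $\lambda/2d$. Let the double arrows (rate $\epsilon$ on each triple $z \to x \to y$) be common too, each producing a type-$1$ birth onto $y$ in whichever process has $z$ and $x$ both occupied by $1$s and $y$ empty. Writing $u = \exp(a_{11}/2d) \geq 1$, the single and double arrows together give $\xi$ a type-$1$ birth rate onto an empty neighbour equal to $\tfrac{\lambda}{2d} + \epsilon k$, where $k$ is the number of type-$1$ neighbours of the source in $\xi$, whereas the correct rate for $\xi$ is $\tfrac{\lambda}{2d}\,u^{k}$. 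I would therefore superimpose an independent family of type-$1$ arrows, acting only on $\xi$, carrying the missing rate $\tfrac{\lambda}{2d}u^{k} - \tfrac{\lambda}{2d} - \epsilon k$, obtained by thinning a high-rate Poisson process through uniform marks read off the configuration at time $t-$.

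The analytic heart of the argument is that this missing rate is nonnegative, which is exactly where $a_{11}>0$ enters. Since $\epsilon = \tfrac{\lambda}{(2d)^2}(u-1)$, the inequality to check is $1 + \tfrac{k}{2d}(u-1) \leq u^{k}$ for every integer $0 \leq k \leq 2d$, and this follows from Bernoulli's inequality $u^{k}=(1+(u-1))^{k} \geq 1 + k(u-1) \geq 1 + \tfrac{k}{2d}(u-1)$, valid because $u \geq 1$. Hence the extra $\xi$-arrows are legitimate, and by construction both marginals have the prescribed generators ($\zeta$ needs no extra arrows, so its law is immediate). It then remains to propagate $\preceq$ across events. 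Deaths and the shared single arrows implement maps that are monotone for $\preceq$: the only way to create the maximal value $1$ at a target, or the minimal value $2$, is traced through the order constraints on the tails, using $\xi(z)=1$ whenever $\zeta(z)=1$ and $\zeta(w)=2$ whenever $\xi(w)=2$. The shared double arrows can only turn $0$ into $1$, and do so in $\xi$ whenever they do so in $\zeta$, so they preserve $\preceq$; finally the extra arrows only raise a value from $0$ to $1$ in $\xi$, which never breaks the order. Passing the order through the countably many events of the graphical representation yields the proposition.

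The step I expect to be the main obstacle is the bookkeeping that forces the three arrow families to reproduce the two correct marginal dynamics simultaneously while sharing the single and double arrows — in particular the monotonicity of the shared single-arrow map, where the source type differs between the two processes and one must invoke the order on the tails to rule out the configurations that would violate $\preceq$ at the target. By contrast the rate comparison reduces cleanly to Bernoulli's inequality, and the sign condition $a_{11}>0$ (equivalently $u \geq 1$) is precisely what that inequality requires, consistent with the failure of attractiveness flagged in the text for $a_{11}<0$.
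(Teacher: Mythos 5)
Your proof is correct and rests on the same underlying idea as the paper's---a Harris graphical coupling under which the partial order $2 \prec 0 \prec 1$ (equivalently, $\zeta^1 \subset \xi^1$ and $\zeta^2 \supset \xi^2$) is preserved event by event---but the two constructions assemble the shared randomness in opposite directions. The paper starts from the graphical representation of $\xi_t$ (arrows at rate $M/2d$ with $M = \lambda e^{a_{11}}$ and uniform marks $U_n$) and realizes the double arrows of $\zeta_t$ as sub-events: an auxiliary uniformly chosen neighbor $V_n$ is attached to each arrow, the double arrow $V_n \to x \to y$ is declared active for $\zeta$ exactly when $\lambda < U_n \leq \lambda e^{a_{11}/2d}$ and $\zeta_{t-}(V_n)=1$, and since $\zeta_{t-}(V_n)=1$ forces $\xi_{t-}(V_n)=1$ and hence $f_1(x,\xi_{t-}) \geq 1/2d$, every $\zeta$-birth of a $1$ is automatically covered by a $1$-open arrow for $\xi$; no rate inequality beyond the monotonicity of $\exp$ is needed. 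You instead take the three independent Poisson families defining $\zeta_t$ as primitive, share them with $\xi_t$, and superimpose a $\xi$-only surplus whose nonnegativity is exactly the Bernoulli bound $u^k \geq 1 + \tfrac{k}{2d}(u-1)$ with $u = e^{a_{11}/2d}$. The two couplings are equivalent (the same inequality is implicit in the paper's computation showing the double arrows occur at rate $\ep$), and both hinge on $a_{11} \geq 0$; your version makes the rate bookkeeping explicit at the cost of having to verify by hand that the shared single arrows are monotone for the order $2 \prec 0 \prec 1$, which you correctly flag as the delicate step and which does go through since the two birth types act on the maximal and minimal elements of the order respectively.
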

\begin{proof}
 Assume that~$\xi_t$ is constructed from the graphical representation described at the beginning of Section~\ref{sec:coupling} with~$M = \lambda \exp (a_{11})$.
 In particular, an arrow~$x \to y$ at time~$t = B_n (\vec{xy})$ is
 $$ \begin{array}{rcl}
    \hbox{1-open for~$\xi_t$} & \Longleftrightarrow & U_n (\vec{xy}) \leq \lambda \exp (a_{11} f_1 (x, \xi_{t-})), \vspace*{4pt} \\
    \hbox{2-open for~$\xi_t$} & \Longleftrightarrow & U_n (\vec{xy}) \leq \lambda. \end{array} $$
 To also construct the process~$\zeta_t$, to each pair
 $$ (B_n (\vec{xy}), U_n (\vec{xy})), \quad \hbox{we attach} \quad V_n (\vec{xy}) = \uniform \{x - e_1, x + e_1, \ldots, x- e_d, x + e_d \}. $$
 Then, we draw an additional arrow~$(V (\vec{xy}), B_n (\vec{xy})) \to (x, B_n (\vec{xy}))$ if and only if
 $$ \lambda < U_n (\vec{xy}) \leq \lambda \exp (a_{11} / 2d), $$
 and assume that~$(x, B_n (\vec{xy})) \to (y, B_n (\vec{xy}))$ is~1-open for~$\zeta_t$ if and only if
 $$ U_n (\vec{xy}) \leq \lambda \quad \hbox{or} \quad (\lambda < U_n (\vec{xy}) \leq \lambda \exp (a_{11} / 2d) \ \ \hbox{and} \ \ \zeta_{t-} (V_n (\vec{xy})) = 1). $$
 The~2-open arrows for the process~$\zeta_t$ are defined as for the process~$\xi_t$.
 In addition, for both processes, players of either type are killed by the crosses, and type~$i$ players give birth through the~$i$-open arrows.
 Note that the rate of a double arrow~$z \to x \to y$ is given by
 $$ \begin{array}{l}
    \displaystyle \frac{M}{2d} \ P \bigg(\lambda < U_n (\vec{xy}) \leq \lambda \exp \bigg(\frac{a_{11}}{2d} \bigg) \ \hbox{and} \ V_n (\vec{xy}) = z \bigg) \vspace*{8pt} \\ \hspace*{30pt} =
    \displaystyle \frac{M}{2d} \ \bigg(\frac{\lambda}{M} \,\bigg(\exp \bigg(\frac{a_{11}}{2d} \bigg) - 1 \bigg) \bigg) \ \frac{1}{2d} =
    \displaystyle \frac{\lambda}{(2d)^2} \,\bigg(\exp \bigg(\frac{a_{11}}{2d} \bigg) - 1 \bigg) = \ep, \end{array} $$
 so this construction indeed generates the process~$\zeta_t$.
 Since the~1-open arrows corresponding to a single arrow for~$\zeta_t$, the~2-open arrows, and the crosses have the same effect on both processes, in order to show the two inclusions, it suffices to focus on the effect of the double arrows.
 Assume that the inclusions are true until right before time~$t = B_n (\vec{xy})$, and that at this time there is a double arrow~$z \to x \to y$ and that a~1 at site~$x$ gives birth onto site~$y$ for the process~$\zeta_t$, i.e.,
 $$ \zeta_{t-} (z) = \zeta_{t-} (x) = 1, \quad \zeta_{t-} (y) = 0, \quad \lambda < U_n (\vec{xy}) \leq \lambda \exp (a_{11} / 2d). $$
 Using that~$\xi_{t-}$ has more~1s than~$\zeta_{t-}$ implies that~$\xi_{t-} (z) = 1$ hence~$f_1 (x, \xi_{t-}) \geq 1/2d$.
 Using also that~$\xi_{t-}$ has less~2s than~$\zeta_{t-}$, we deduce that
 $$ \xi_{t-} (z) = \xi_{t-} (x) = 1, \quad \xi_{t-} (y) \neq 2, \quad U_n (\vec{xy}) \leq \lambda \exp (a_{11} / 2d) \leq \lambda \exp (a_{11} f_1 (x, \xi_{t-})), $$
 the last inequality showing that the arrow~$x \to y$ is also~1-open for the process~$\xi_t$.
 In particular, either site~$y$ is already occupied by a~1 at time~$t-$ or the~1 at site~$x$ gives birth onto the empty site~$y$ at time~$t$.
 In both cases, the inclusions are preserved.
\end{proof}
\noindent
 Even though the process~$\zeta_t$ gives an advantage to the~1s, because isolated~1s give birth at the same rate as the~2s, how to prove that the~1s win is unclear.
 To show this result, notice that, each time the graphical representation at~$x, y, z$, where~$y$ and~$z$ are distinct neighbors of site~$x$, consists of a death mark at~$z$ then a single arrow~$x \to z$ then a double arrow~$z \to x \to y$, and no other death marks or arrows pointing at any of these three sites in this time window, a player of type~1 at site~$x$ will give birth onto~$z$ through the single arrow and then onto~$y$ through the second piece~$x \to y$ of the double arrow.
 However, the~2s cannot give birth through this arrow.
 See Figure~\ref{fig:arrows} for an illustration.
\begin{figure}[t!]
\label{fig:arrows}
\centering
\scalebox{0.50}{\input{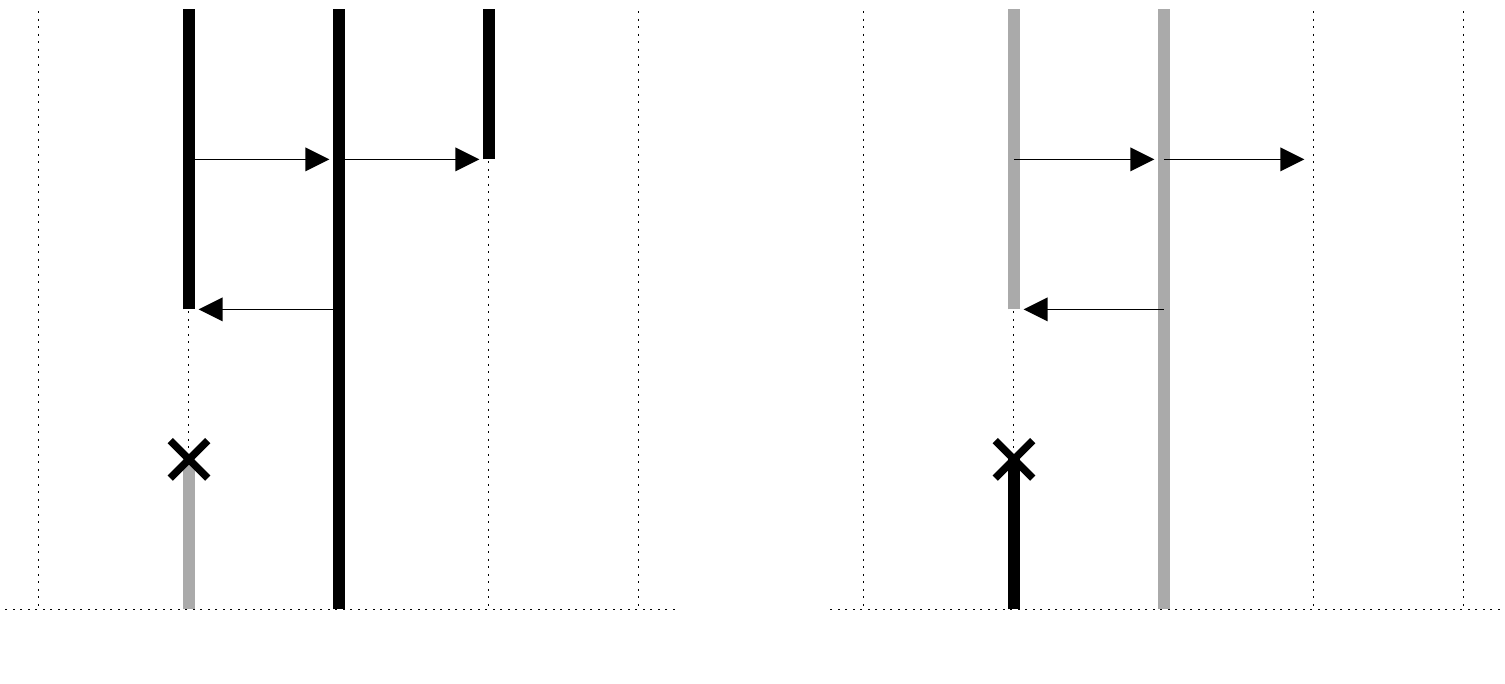_t}}
\caption{\upshape{
 Illustration of the effect of a~1-arrow.
 In both pictures, black refers to a type~1 player, while gray refers to a type~2 player.
 A type~1 player at~$x$ can give birth through the arrow~$x \to y$ if there is a type~1 player at~$z$ but, regardless of the surrounding configuration, type~2 players cannot give birth through the arrow~$x \to y$.}}
\end{figure}
 This motivates the following definition.
\begin{definition}
\label{def:good}
 Assuming that there is a double arrow~$z \to x \to y$ with~$y \neq z$ at some time~$s > 2$, we say that the double arrow is good and label~$x \to y$ with a~1 whenever
\begin{itemize}
 \item there is a death mark at site~$z$ between time~$s - 2$ and time~$s - 1$, \vspace*{4pt}
 \item there is a single arrow~$x \to z$ between time~$s - 1$ and time~$s$, \vspace*{4pt}
 \item there are no other death marks at, single arrows pointing at, or double arrows pointing at sites~$x, y$ or~$z$ between time~$s - 2$ and time~$s$.
\end{itemize}
\end{definition}
\noindent
 As previously explained, the~1s can give birth through the~1-arrows, i.e, the second piece of a good double arrow, but not the~2s.
 In addition, there is a positive density of~1-arrows in the sense that we have the following lemma.
\begin{lemma}
\label{lem:1-arrows}
 Assume that, for~$i = 1, 2, \ldots, n$, there is a double arrow~$z_i \to x_i \to y_i$ with~$y_i \neq z_i$ at some time~$s_i > 2$.
 Then, there exists~$c > 0$ such that
 $$ P ((z_i, s_i) \to (x_i, s_i) \to (y_i, s_i) \ \hbox{is a good for~$i = 1, 2, \ldots, n$}) \geq c^n > 0 $$
 whenever~$\norm{x_i - x_j}_1 \vee |s_i - s_j| > 2$ for all~$i \neq j$.
\end{lemma}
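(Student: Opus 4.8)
The plan is to reduce the joint estimate to a single-double-arrow computation via an independence argument powered by the separation hypothesis. First I would identify, for each index $i$, the collection $\mathcal{R}_i$ of Poisson marks on which the event $G_i$ that the double arrow $z_i \to x_i \to y_i$ at time $s_i$ is good depends. By Definition~\ref{def:good}, $G_i$ is determined entirely by the death marks at the three sites $x_i, y_i, z_i$, the single arrows whose head lies in $\{x_i, y_i, z_i\}$, and the double arrows whose birth target lies in $\{x_i, y_i, z_i\}$, all restricted to the time window $[s_i - 2, s_i]$ (the assumption $s_i > 2$ guarantees this window sits inside $[0, \infty)$). The crucial bookkeeping point, which I would stress, is that each relevant Poisson process is indexed by its \emph{target}: a death process by its site, a single arrow by its head, a double arrow by its birth site. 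Thus, although the tail of an arrow may lie far from $x_i$, the processes entering $G_i$ are all owned by the small set $\{x_i, y_i, z_i\}$, each point of which is within $\ell^1$-distance one of $x_i$.

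Next I would establish the one-arrow lower bound $P(G_i) \geq c$ for a constant $c = c(\lambda, d, a_{11}) \in (0, 1)$ that does not depend on $i$. Conditioned on the double arrow at $s_i$, the event $G_i$ is a finite intersection of independent Poisson events on disjoint space-time pieces: exactly one death mark at $z_i$ in $(s_i - 2, s_i - 1)$, exactly one single arrow $x_i \to z_i$ in $(s_i - 1, s_i)$, and the absence of every other death mark, single arrow, and double arrow with target in $\{x_i, y_i, z_i\}$ throughout the window. Each factor is of the form $e^{-(\text{finite rate}) \times (\text{length})}$ or $(\text{rate})\,e^{-(\cdots)}$, hence strictly positive, and by translation invariance of the graphical representation in space and time their product is the same constant $c$ for every $i$.

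The heart of the argument is to convert the hypothesis $\norm{x_i - x_j}_1 \vee |s_i - s_j| > 2$ into mutual independence of $G_1, \dots, G_n$. I would show that for every pair $i \neq j$ the mark-regions $\mathcal{R}_i$ and $\mathcal{R}_j$ are disjoint: if $|s_i - s_j| > 2$ then the length-two windows $[s_i - 2, s_i]$ and $[s_j - 2, s_j]$ do not meet, so the regions use marks at disjoint times; if instead $\norm{x_i - x_j}_1 > 2$ then, since $x_i, y_i, z_i$ all lie within $\ell^1$-distance one of $x_i$ and likewise for $j$, the triangle inequality gives $\{x_i, y_i, z_i\} \cap \{x_j, y_j, z_j\} = \varnothing$, and because each process is attributed to its target, no process is shared. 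Pairwise disjointness of the $\mathcal{R}_i$ means no mark lies in two of them, so by the independent scattering of Poisson processes (restrictions to pairwise disjoint time-sets, and distinct processes, are jointly independent) the events $G_1, \dots, G_n$ are mutually independent. I would also verify that conditioning on the presence of all $n$ double arrows is harmless: the double arrow at $s_j$ carries target $y_j$ and time $s_j$, which by the same separation lies outside $\mathcal{R}_i$ for $j \neq i$, so $P(G_i \mid \text{all } n \text{ double arrows}) = P(G_i \mid \text{double arrow } i) = c$. Combining the two previous steps then gives $P(\bigcap_i G_i \mid \text{double arrows}) = \prod_i P(G_i) = c^n$, which is the claim.

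The main obstacle I anticipate is precisely the independence step, namely being careful that ``pointing at $\{x_i, y_i, z_i\}$'' refers to the \emph{target} of each arrow rather than its full spatial footprint; once the processes are correctly attributed to their targets, the threshold $2$ is exactly right, being the temporal diameter of the window and just beyond the spatial reach of the radius-one neighborhood. The one-arrow positivity bound is routine but must be written so that the constant $c$ is visibly independent of $i$.
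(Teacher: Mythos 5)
Your proposal is correct and follows essentially the same route as the paper's proof: a positivity computation for a single good double arrow from independent Poisson factors on disjoint space-time pieces, followed by mutual independence of the $n$ events because the separation hypothesis $\norm{x_i - x_j}_1 \vee |s_i - s_j| > 2$ forces the relevant regions of the graphical representation to be disjoint. Your extra bookkeeping (attributing each Poisson process to its target site, the triangle-inequality check of disjointness, and verifying that conditioning on the presence of the $n$ double arrows is harmless) only makes explicit what the paper leaves implicit.
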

\begin{proof}
 Note that whether a double arrow is good or not depends on the graphical representation but not on the configuration.
 Also, letting~$N_{\mu}$ be the Poisson random variable with mean~$\mu$, observing that single/double arrows pointing at a given site occur at rate~$\lambda$ and~$(2d)^2 \ep$, respectively, and using independence of the Poisson processes in the graphical representation, we deduce that the probability that the double arrow~$z \to x \to y$ at time~$s$ is a good arrow is
 $$ \begin{array}{rcl}
      c & \n = \n & P (N_1 = 1) \,P (N_{\lambda} = 1) \,P (N_5 = 0) \,P (N_{5 \lambda} = 0) \,P (N_{6 (2d)^2 \ep} = 0) \vspace*{4pt} \\
        & \n = \n & \lambda \exp (- 6 (1 + \lambda + (2d)^2 \ep)) > 0. \end{array} $$
 In addition, under the assumption of the lemma about the space-time distance between the arrows, whether the double arrows~$z_i \to x_i \to y_i$ at time~$s_i$ are good or not are determined by disjoint parts of the graphical and are therefore independent, from which it follows that
 $$ P ((z_i, s_i) \to (x_i, s_i) \to (y_i, s_i) \ \hbox{is good for~$i = 1, 2, \ldots, n$}) = c^n > 0. $$
 This completes the proof.
\end{proof}
\noindent
 The~1s can give birth through the second piece of a double arrow if~(but not only if) the double arrow is good.
 To prove that the~1s win, we consider a third process~$\eta_t$ coupled with~$\zeta_t$, obtained by using the~1-arrows only.
 More precisely, having a graphical representation of the process~$\zeta_t$ with~1-arrows as in Definition~\ref{def:good}, the process~$\eta_t$ is constructed as follows:
\begin{itemize}
\item
 Players of either type at site~$x$ are killed by a death mark~$\times$ at that site. \vspace*{4pt}
\item
 Players of either type at site~$x$ give birth through a single arrow~$x \to y$ if site~$y$ is empty. \vspace*{4pt}
\item
 Players of type~1 at site~$x$ give birth through a~1-arrow~$x \to y$ if site~$y$ is empty.
\end{itemize}
 Because death marks, single arrows, and~1-arrows have the same effect on both processes, while the~1s in the process~$\zeta_t$ can give birth through additional arrows~(second piece of double arrows that are not necessarily good), the process~$\eta_t$ has less~1s and more~2s, i.e.,
\begin{equation}
\label{eq:zeta-eta}
\eta_0^1 \subset \zeta_0^1 \quad \hbox{and} \quad \eta_0^2 \supset \zeta_0^2 \quad \Longrightarrow \quad \eta_t^1 \subset \zeta_t^1 \quad \hbox{and} \quad \eta_t^2 \supset \zeta_t^2 \quad \hbox{for all} \ t \geq 0.
\end{equation}
 In particular, to complete the proof of the theorem, the last step is to show that the~1s win in the process~$\eta_t$, which relies on duality.
 To describe the dual process of~$\eta_t$, we say that, for~$s < t$, there is a path~$(y, s) \uparrow (x, t)$ if there are times and sites
 $$ s = s_0 < s_1 < \cdots < s_{n + 1} = t \quad \hbox{and} \quad y = x_0, x_1, ..., x_n = x $$
 such that the following two conditions hold:
\begin{itemize}
\item for~$i = 1, 2, \ldots, n$, there is a single arrow or a~1-arrow~$x_{i - 1} \to x_i$ at time~$s_i$ and \vspace*{4pt}
\item for~$i = 0, 1, \ldots, n$, the time segments~$\{x_i \} \times (s_i, s_{i + 1}$ do not contain any death mark~$\times$.
\end{itemize}
 Then, we say that there is a dual path~$(x, t) \downarrow (y, t - s)$ if there is a path~$(y, t - s) \uparrow (x, t)$ and, for each~$(x, t) \in \Z^d \times \R_+$, define the dual process
 $$ \hat{\eta}_s (x, t) = \{y \in \Z^d : (x, t) \downarrow (y, t - s) \} \quad \hbox{for all} \quad 0 \leq s \leq t. $$
 This process is similar to the dual process of the contact process.
 In particular, like for the contact process,the space-time region 
 $$ \Gamma = \{(\hat \eta_s (x, t), s): 0 \leq s \leq t \}, $$
 exhibits a tree structure, and the players in the set~$\hat \eta_t (x, t)$ at time zero represent the potential ancestors of the player at~$(x, t)$ if this space-time location is indeed occupied.
 However, in contrast with the basic contact process, the ancestors can now be of two types, and the type of~$(x, t)$ depends on the type and location of the ancestors.
 More precisely, Neuhauser~\cite{neuhauser_1992} proved that the sites in the dual process at time~0~(in fact the dual paths going from~$(x, t)$ to time~0) can be arranged in the order they determine the type of~$(x, t)$, which she referred to as the ancestor hierarchy and which depends on the topology of the tree structure~$\Gamma$.
 We refer to~\cite{neuhauser_1992, durrett_neuhauser_1997} for a definition of the ancestor hierarchy and to~\cite[Section~1.4.1]{lanchier_2024} for another more visual description.
 Then, the type of the player at site~$x$ at time~$t$~(when~$(x, t)$ is indeed occupied) can be determined as follows:
\begin{itemize}
\item
 If the first dual path in the hierarchy lands on an empty site, then we look at the next dual path in the hierarchy until the first dual path that lands on an occupied site. \vspace*{4pt}
\item
 If the first dual path landing on an occupied site lands on a~1, then~$\eta_t (x) = 1$. \vspace*{4pt}
\item
 If this dual path lands on a~2, then~$\eta_t (x) = 2$ unless the dual path crosses a~1-arrow, in which case we look at the last~1-arrow the dual path crosses and remove all the dual paths going through the tail of this arrow, which are the next dual paths in the hierarchy.
\end{itemize}
 Because there is a positive density of~1-arrows, it follows from the proof of~\cite[Theorem~1]{neuhauser_1992} that the~1s win in the process~$\eta_t$.
 This, together with Proposition~\ref{prop:coupling} and the couplings~\eqref{eq:xi-zeta} and~\eqref{eq:zeta-eta}, implies that the~1s also win in the process~$\xi_t$ under the assumptions of Theorem~\ref{th:a11>0}.


\section{Proof of Theorem~\ref{th:a12}}
 This section is devoted to the proof of Theorem~\ref{th:a12}, which states that, starting with only~1s on the left of and at the origin and only~2s on the right of the origin in~$d = 1$, the~1s win when
\begin{equation}
\label{eq:cond-a12}
\lambda > \lambda_c, \quad a_{11} \geq a_{22} \geq 0, \quad \hbox{and} \quad a_{12} > a_{21} \geq 0.
\end{equation}
 In this case, because the births of players with two occupied neighbors are unimportant and the type~1 players are always on the left of the type~2 players, the process can be constructed from the following ``simplified'' graphical representation:
\begin{itemize}
\item
 For each site~$x$, put a death mark~$\times$ at site~$x$ at the times of a rate one Poisson process to indicate the death of a player of either type at~$x$. \vspace*{4pt}
\item
 For each site~$x$ and each neighbor~$x \pm 1$, draw a single arrow~$x \to x \pm 1$ at the times of a rate~$\lambda / 2$ Poisson process to indicate that if site~$x$ is occupied by a player of either type and site~$x \pm 1$ is empty then the player at~$x$ gives birth through the arrow. \vspace*{4pt}
\item
 For each site~$x$, draw a double~11-arrow~$x \pm 1 \overset{1 \ }{\longrightarrow} x \overset{1 \ }{\longrightarrow} x \mp 1$ at the times of a Poisson process with rate
 $\ep_{11} = \lambda (\exp (a_{11} / 2) - 1) / 2$ to indicate that if~$x \pm 1$ and~$x$ are occupied by a~1 and~$x \mp 1$ is empty then the player at~$x$ gives birth onto~$x \mp 1$. \vspace*{4pt}
\item
 For each site~$x$, draw a double~22-arrow~$x \pm 1 \overset{2 \ }{\longrightarrow} x \overset{2 \ }{\longrightarrow} x \mp 1$ at the times of a Poisson process with rate
 $\ep_{22} = \lambda (\exp (a_{22} / 2) - 1) / 2$ to indicate that if~$x \pm 1$ and~$x$ are occupied by a~2 and~$x \mp 1$ is empty then the player at~$x$ gives birth onto~$x \mp 1$. \vspace*{4pt}
\item
 For each~$x$, draw a double~12-arrow~$x + 1 \overset{1 \ }{\longrightarrow} x \overset{2 \ }{\longrightarrow} x - 1$ at the times of a Poisson process with rate~$\ep_{12} = \lambda (\exp (a_{12} / 2) - 1) / 2$ to indicate that if site~$x + 1$ is occupied by a~2, site~$x$ occupied by a~1, and site~$x - 1$ empty then the player at~$x$ gives birth onto~$x - 1$. \vspace*{4pt}
\item
 For each~$x$, draw a double~21-arrow~$x - 1 \overset{2 \ }{\longrightarrow} x \overset{1 \ }{\longrightarrow} x + 1$ at the times of a Poisson process with rate~$\ep_{21} = \lambda (\exp (a_{21} / 2) - 1) / 2$ to indicate that if site~$x - 1$ is occupied by a~1, site~$x$ occupied by a~2, and site~$x + 1$ empty then the player at~$x$ gives birth onto~$x + 1$.
\end{itemize}
 Note that the assumptions in~\eqref{eq:cond-a12} imply that~$\ep_{11} \geq \ep_{22} \geq 0$ and~$\ep_{12} > \ep_{21} \geq 0$.
 To prove the theorem, we first partition time into intervals where the rightmost~1 and the leftmost~2 are in contact and intervals where they are not in contact.
 More precisely, letting
 $$ R_t = \sup \{x \in \Z : \xi_t (x) = 1 \} \quad \hbox{and} \quad L_t = \inf \{x \in \Z : \xi_t (x) = 2 \} $$
 be the location of the rightmost~1 and the location of the leftmost~2 at time $t$, we define two sequences of stopping times~$(\sigma_i)_{i \geq 0}$ and~$(\tau_i)_{i \geq 1}$ recursively by letting~$\sigma_0 = \tau_0 = 0$,
 $$ \sigma_i = \inf \{t > \tau_{i - 1} : L_t - R_t > 1 \} \quad \hbox{and} \quad \tau_i = \inf \{t > \sigma_i : L_t - R_t = 1 \} \quad \hbox{for all} \quad i \geq 1. $$
 To quantify the invasion of strategy~1, we also introduce the process~$X_t$ that keeps track of the location of the interface/midpoint between the~1s and the~2s defined as
 $$ X_t = (R_t + L_t) / 2 \quad \hbox{for all} \quad t \geq 0. $$
 To prove that the~1s win, meaning that~$X_t$ converges almost surely to infinity, the key is to lower bound the spatial displacements of the interface between times~$\tau_i$ and~$\tau_{i + 1}$ and upper bound the temporal displacements~$\tau_{i + 1} - \tau_i$, which is done in the next two lemmas.
\begin{lemma}
\label{lem:space}
 There exists~$\ep > 0$ such that~$E (X_{\tau_{i + 1}} - X_{\tau_i}) \geq \ep$ for all~$i \geq 1$.
\end{lemma}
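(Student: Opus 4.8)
The plan is to establish a uniform lower bound on the drift over a single contact-to-contact cycle and then conclude by the strong Markov property. Conditioning on~$\mathcal{F}_{\tau_i}$ and using the translation invariance of the graphical representation, it suffices to find~$\ep > 0$, \emph{independent of the random configuration inherited at~$\tau_i$}, such that
$$ E (X_{\tau_{i+1}} - X_{\tau_i} \mid \mathcal{F}_{\tau_i}) \geq \ep. $$
From now on I translate space so that~$R_{\tau_i} = 0$ and~$L_{\tau_i} = 1$, hence~$X_{\tau_i} = 1/2$, and I write~$\rho (x) = 1 - x$ for the reflection about the interface~$X_{\tau_i}$, which exchanges the rightmost~1 and the leftmost~2.

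The first step produces a nonnegative baseline drift through a mirror coupling. Reflecting the type-2 dynamics through~$\rho$ turns the~2s, which spread to the left with intra payoff~$a_{22}$ and inter payoff~$a_{21}$, into a process spreading to the right governed by the same two payoffs. Since~$a_{11} \geq a_{22} \geq 0$ and~$a_{12} \geq a_{21} \geq 0$, the birth rates of the~1s dominate those of the reflected~2s, and a coupling in the spirit of Proposition~\ref{prop:coupling} yields~$\xi_t^1 \supset \rho (\xi_t^2)$ for all~$t$ in the cycle. Taking suprema gives
$$ R_t = \sup \xi_t^1 \geq \sup \rho (\xi_t^2) = \rho (\inf \xi_t^2) = 1 - L_t, \quad \hbox{i.e.} \quad X_t = \tfrac{1}{2} (R_t + L_t) \geq \tfrac{1}{2} = X_{\tau_i}, $$
so that, in particular, $E (X_{\tau_{i+1}} - X_{\tau_i} \mid \mathcal{F}_{\tau_i}) \geq 0$.

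The second step upgrades this to a strictly positive drift, and is where the \emph{strict} inequality~$a_{12} > a_{21}$ is used. While the two strategies are in contact, the rightmost~1 at site~$0$ and the leftmost~2 at site~$1$ are both killed at rate one, but the rightmost~1 gives birth to its left at rate~$\tfrac{\lambda}{2} \exp (a_{12} / 2)$, strictly larger than the rate~$\tfrac{\lambda}{2} \exp (a_{21} / 2)$ at which the leftmost~2 gives birth to its right; equivalently~$\ep_{12} > \ep_{21}$. Consequently the~1s heal the gap behind their edge faster than the~2s heal the gap ahead of theirs, so when an edge particle dies during the first contact interval the rightmost~1 recedes, in distribution, by strictly less than the leftmost~2. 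I would make this quantitative by exhibiting an explicit favorable event~$G$, measurable with respect to the Poisson marks in a bounded space-time box around the interface and hence of probability at least some constant~$p > 0$ that does not depend on the configuration inherited at~$\tau_i$, on which the leftmost~2 dies and is not immediately replaced at site~$1$ while the rightmost~1 survives, forcing~$X_{\sigma_{i+1}} \geq X_{\tau_i} + \tfrac{1}{2}$ at the separation time. Combining this gain with the nonnegative continuation provided by rerunning the mirror coupling from~$\sigma_{i+1}$ gives~$X_{\tau_{i+1}} - X_{\tau_i} \geq \tfrac{1}{2} \ind_G$, whence
$$ E (X_{\tau_{i+1}} - X_{\tau_i} \mid \mathcal{F}_{\tau_i}) \geq \tfrac{1}{2} \,P (G \mid \mathcal{F}_{\tau_i}) \geq \tfrac{p}{2} =: \ep > 0. $$

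The main obstacle is to make the mirror comparison rigorous: the process is \emph{not} attractive in general, so one has to verify carefully that, with all relevant payoffs nonnegative and ordered as above, the birth rates of the~1s really do dominate those of the reflected~2s uniformly along the coupled trajectory, including at the moving interface where the two populations compete for the same empty sites. A secondary, more bookkeeping-type difficulty is to guarantee that both the probability~$p$ and the resulting displacement in the favorable event~$G$ are bounded below \emph{independently} of the configuration inherited at~$\tau_i$, and to transfer the fixed-time stochastic domination to the random, almost surely finite time~$\tau_{i+1}$ through the coupling rather than through optional stopping.
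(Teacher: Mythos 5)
There is a genuine gap, and it sits in your very first reduction. You ask for an~$\ep > 0$ such that~$E (X_{\tau_{i+1}} - X_{\tau_i} \mid \mathcal{F}_{\tau_i}) \geq \ep$ \emph{uniformly over the configuration inherited at~$\tau_i$}, but no such uniform bound exists. If at time~$\tau_i$ the rightmost~1 is isolated (second rightmost~1 far to its left) while the~2s fill the whole half-line to the right, then at the separation time~$\sigma_{i+1}$ each edge particle dies first with probability~$1/2$, and the midpoint jumps by~$-(R_{\tau_i} - R^2)/2$ or~$+(L^2 - L_{\tau_i})/2 = +1/2$ accordingly, so the conditional expected jump is very negative; the subsequent free-evolution phase cannot be counted on to repair this. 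Your mirror coupling~$\xi_t^1 \supset \rho (\xi_t^2)$ would indeed force~$X_t \geq X_{\tau_i}$, but it requires the inclusion~$\xi_{\tau_i}^1 \supset \rho (\xi_{\tau_i}^2)$ as an input at time~$\tau_i$, and that is exactly what an arbitrary inherited configuration fails to provide. This is why the paper's proof works in distribution rather than pathwise: it propagates, by induction on~$i$ starting from the Heaviside initial condition (where the two edge laws coincide), the stochastic domination~$\mu_t^1 \geq s (\mu_t^2)$ of the law of the~1s seen from~$R_t$ over the mirror of the law of the~2s seen from~$L_t$. The strict inequality~$\ep_{12} > \ep_{21}$ then makes this domination strict by time~$\sigma_{i+1}$, which yields~$E \,|L - L^2| > E \,|R - R^2|$ for the edge gaps, hence an expected midpoint jump~$\tfrac{1}{4} (E \,|L - L^2| - E \,|R - R^2|) > 0$ at separation; uniformity in~$i$ comes from convergence of the edge distributions, not from a configuration-uniform estimate.

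A secondary problem is the inequality~$X_{\tau_{i+1}} - X_{\tau_i} \geq \tfrac{1}{2} \,\ind_G$. Even granting the mirror domination about the \emph{fixed} reflection point~$X_{\tau_i}$, that domination only returns~$X_{\tau_{i+1}} \geq X_{\tau_i}$; to keep the gain of~$1/2$ earned on~$G$ at time~$\sigma_{i+1}$ you would need the domination about the \emph{new} midpoint~$X_{\sigma_{i+1}}$, which is a strictly stronger statement (it compares the~1s to a differently translated reflection of the~2s) and does not follow from the old one. Finally, the attractiveness issue you flag at the end is actually the benign part here: with~$a_{11}, a_{22}, a_{12}, a_{21} \geq 0$ all the double-arrow rates~$\ep_{11}, \ep_{22}, \ep_{12}, \ep_{21}$ in the simplified one-dimensional graphical representation are nonnegative, so the process restricted to this initial condition is attractive; the real difficulty is the one above.
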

\begin{proof}
 Let~$\mu_t^1$ and~$\mu_t^2$ be respectively the distribution of~1s at time~$t$ viewed from the right edge~$R_t$ and the distribution of~2s at time~$t$ viewed from the left edge~$L_t$.
 We say that~$\mu_t^1$ dominates the mirror image of~$\mu_t^2$, which we write~$\mu_t^1 \geq s (\mu_t^2)$, when, for all~$A \subset \N$ finite,
\begin{equation}
\label{eq:space-1}
  P (\xi_t (R_t - x) = 1 \ \hbox{for all} \ x \in A) \geq P (\xi_t (L_t + x) = 2 \ \hbox{for all} \ x \in A).
\end{equation}
 Similarly, we say that~$\mu_t^1$ strictly dominates the mirror image of~$\mu_t^2$ and write~$\mu_t^1 > s (\mu_t^2)$ when the inequality above is strict.
 Assume that~$\mu_{\tau_i}^1 \geq s (\mu_{\tau_i}^2)$.
 Between time~$\tau_i$ and time~$\sigma_{i + 1}$, the rightmost~1 and the leftmost~2 are in contact.
 Using attractiveness, $\ep_{11} \geq \ep_{22}$ and~$\ep_{12} > \ep_{21}$, and the fact that~$\mu_{\tau_i}^1 \geq s (\mu_{\tau_i}^2)$ by assumption, implies that the stochastic domination still holds until time~$\sigma_{i + 1}$.
 More generally, since there is a positive density of empty sites and
 $$ \hbox{rate of} \ \,L_t \overset{1 \ }{\longrightarrow} R_t \overset{2 \ }{\longrightarrow} R_t - 1 = \ep_{12} > \ep_{21} =
    \hbox{rate of} \ \,R_t \overset{2 \ }{\longrightarrow} L_t \overset{1 \ }{\longrightarrow} L_t + 1, $$
 we have~$\mu_t^1 > s (\mu_t^2)$ until time~$\sigma_{i + 1}$.
 In particular, letting
 $$ R^2_t = \sup \{x < R_t : \xi_t (x) = 1 \} \quad \hbox{and} \quad L^2_t = \inf \{x > L_t : \xi_t (x) = 2 \} $$
 be the location of the second rightmost~1 and second leftmost~2,
 $$ P (|R_{\sigma_{i + 1}-} - R^2_{\sigma_{i + 1}-}| \geq n) < P (|L_{\sigma_{i + 1}-} - L^2_{\sigma_{i + 1}-}| \geq n) \quad \hbox{for all} \quad n \geq 1. $$
 Using the previous strict domination, we deduce that
\begin{equation}
\label{eq:space-2}
\begin{array}{rcl}
  4 \ep_i & \n = \n & E (L^2_{\sigma_{i + 1}-} - L_{\sigma_{i + 1}-}) + E (R^2_{\sigma_{i + 1}-} - R_{\sigma_{i + 1}-}) \vspace*{4pt} \\
          & \n = \n & E \,|L_{\sigma_{i + 1}-} - L^2_{\sigma_{i + 1}-}| - E \,|R_{\sigma_{i + 1}-} - R^2_{\sigma_{i + 1}-}| > 0. \end{array}
\end{equation}
 Now, between time~$\sigma_{i + 1}$ and time~$\tau_{i + 1}$, the rightmost~1 and the leftmost~2 are not in contact, so the~1s and the~2s evolve according to two independent processes.
 Because~$\ep_{11}, \ep_{22} \geq 0$, these two processes are again attractive, and since~$\ep_{11} \geq \ep_{22}$ and~$\mu_{\sigma_{i + 1}}^1 > s (\mu_{\sigma_{i + 1}}^2)$, we have that
\begin{equation}
\label{eq:space-3}
\mu_t^1 > s (\mu_t^2) \quad \hbox{for all} \quad t \in [\sigma_{i + 1}, \tau_{i + 1}] \quad \hbox{and} \quad E (X_{\tau_{i + 1}}) \geq E (X_{\sigma_{i + 1}}).
\end{equation}
 Combining~\eqref{eq:space-2}--\eqref{eq:space-3}, and using that, at time~$\sigma_{i + 1}$, either the rightmost~1 dies or the leftmost~2 dies, each with probability one-half, we deduce that
\begin{equation}
\label{eq:space-4}
\begin{array}{rcl}
\displaystyle E (X_{\tau_{i + 1}} - X_{\tau_i}) & \n = \n &
\displaystyle E (X_{\tau_{i + 1}} - X_{\sigma_{i + 1}}) + E (X_{\sigma_{i + 1}} - X_{\tau_i}) \geq E (X_{\tau_{i + 1}} - X_{\sigma_{i + 1}}) \vspace*{12pt} \\ & \n = \n &
\displaystyle \frac{1}{2} \ E \bigg(\frac{R_{\tau_{i + 1}-} + L^2_{\tau_{i + 1}-}}{2} - \frac{R_{\tau_{i + 1}-} + L_{\tau_{i + 1}-}}{2} \bigg) \vspace*{8pt} \\ && \hspace*{20pt} + \
\displaystyle \frac{1}{2} \ E \bigg(\frac{R^2_{\tau_{i + 1}-} + L_{\tau_{i + 1}-}}{2} - \frac{R_{\tau_{i + 1}-} + L_{\tau_{i + 1}-}}{2} \bigg) \vspace*{8pt} \\ & \n = \n &
\displaystyle E \bigg(\frac{L^2_{\tau_{i + 1}-} - L_{\tau_{i + 1}-}}{4} \bigg) + E \bigg(\frac{R^2_{\tau_{i + 1}-} - R_{\tau_{i + 1}-}}{4} \bigg) = \ep_i > 0. \end{array}
\end{equation}
 Because~$\mu_t^1 \geq s (\mu_t^2)$ at time~$\tau_0 = 0$ since in this case the two probabilities in~\eqref{eq:space-1} are both equal to one, and because the stochastic domination is preserved across time according to the first inequality in~\eqref{eq:space-3}, a simple induction shows that the inequality~\eqref{eq:space-4} holds for all~$i \geq 1$.
 Finally, because the two measures~$\mu_t^1$ and~$\mu_t^2$ both converge to a limit due to attractiveness, the sequence~$(\ep_i)_{i \geq 0}$ also converges to a limit therefore~$\ep = \inf_i \ep_i > 0$.
 The lemma holds for this~$\ep$.
\end{proof}
\noindent
 To complete the proof of the theorem, the last step is to show that
\begin{lemma}
\label{lem:time}
 There exists~$c < \infty$ such that~$E (\tau_{i + 1} - \tau_i) \leq c$ for all~$i \geq 1$.
\end{lemma}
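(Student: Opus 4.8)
The plan is to decompose $\tau_{i+1} - \tau_i = (\sigma_{i+1} - \tau_i) + (\tau_{i+1} - \sigma_{i+1})$ into the time the two edges spend in contact and the time they spend apart, and to bound the expectation of each piece uniformly in~$i$.

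For the contact phase~$[\tau_i, \sigma_{i+1})$ we have~$L_t - R_t = 1$, so the rightmost~1 sits at~$R_{\tau_i}$ and the leftmost~2 at~$L_{\tau_i} = R_{\tau_i} + 1$. Neither edge can move while they stay in contact: the only way to increase~$R_t$ is a~1-birth onto~$R_t + 1 = L_t$, which is blocked by the~2 sitting there, and symmetrically for~$L_t$. Hence~$R_t$ and~$L_t$ remain frozen until the first death mark falls on one of these two sites, which separates them and defines~$\sigma_{i+1}$. Since death marks fall at rate one per site and the two sites are distinct, $\sigma_{i+1} - \tau_i$ is exponentially distributed with rate two, so~$E (\sigma_{i+1} - \tau_i) = 1/2$.

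For the non-contact phase~$[\sigma_{i+1}, \tau_{i+1})$ we have~$L_t - R_t \geq 2$, so the~1s and the~2s occupy disjoint, non-adjacent regions. In particular the double~12- and~21-arrows are inactive, since they require a~1 and a~2 at adjacent sites, and the two populations evolve according to the death marks, single arrows, and double~11- and~22-arrows restricted to their respective regions. Because these regions are disjoint, the two evolutions are driven by independent parts of the graphical representation and are therefore independent contact processes, each dominating a basic contact process with supercritical rate~$\lambda > \lambda_c$ because~$\ep_{11}, \ep_{22} \geq 0$ only add births. Consequently the right edge~$R_t$ of the~1s drifts to the right and the left edge~$L_t$ of the~2s drifts to the left, each at a speed at least~$\alpha = \alpha (\lambda) > 0$, so the gap~$L_t - R_t$ decreases at combined asymptotic speed at least~$2 \alpha$ and reaches~$1$ in finite time, defining~$\tau_{i+1}$.

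To turn this into a uniform bound, write~$G_i = L_{\sigma_{i + 1}} - R_{\sigma_{i + 1}}$ for the size of the gap created at separation. At~$\sigma_{i+1}$ either the rightmost~1 or the leftmost~2 dies, each with probability one-half, so~$G_i$ equals~$1 + (R - R^2)$ or~$1 + (L^2 - L)$ evaluated at~$\sigma_{i+1}-$; since these inter-particle distances are functionals of the edge-relative measures~$\mu_t^1$ and~$\mu_t^2$, which converge by attractiveness, their expectations are bounded uniformly in~$i$, giving~$E (G_i) \leq C < \infty$. It then remains to bound, uniformly in the random starting gap, the first-passage time for two independent supercritical contact process edges approaching at speed at least~$2 \alpha$ to come within distance one; the linear growth of the edge together with the exponential concentration estimates for its fluctuations yields a bound of the form~$E (\tau_{i + 1} - \sigma_{i + 1} \mid \mathcal{F}_{\sigma_{i + 1}}) \leq c_1 G_i + c_2$, and taking expectations gives~$E (\tau_{i + 1} - \sigma_{i + 1}) \leq c_1 C + c_2$. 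Adding the contact-phase bound yields~$E (\tau_{i + 1} - \tau_i) \leq 1/2 + c_1 C + c_2 =: c < \infty$ for all~$i \geq 1$. The main obstacle is this last step: converting the asymptotic positive edge speed of the supercritical contact process into a genuinely quantitative, uniform-in-$i$ bound on the expected gap-closing time, which requires first-passage and regeneration estimates for the contact process edge rather than just the law of large numbers.
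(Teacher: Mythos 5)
Your decomposition $\tau_{i+1}-\tau_i = (\sigma_{i+1}-\tau_i)+(\tau_{i+1}-\sigma_{i+1})$ and the treatment of the contact phase (edges frozen until one of the two adjacent players dies, so the increment is exponential with rate two and has mean $1/2$) coincide exactly with the paper. The separation phase is where your argument has a genuine gap, and you have in fact named it yourself: you assert that the right edge of the 1s and the left edge of the 2s approach each other at linear speed and that this yields $E(\tau_{i+1}-\sigma_{i+1}\mid\mathcal F_{\sigma_{i+1}})\le c_1G_i+c_2$, but you never supply the estimate. The difficulty is not merely quantitative. At time $\sigma_{i+1}$ the 1s occupy only a \emph{subset} of a half-line, and the right edge of the contact process is monotone in the initial configuration, so the half-line edge-speed theorem gives an upper bound on $R_t$, not a lower bound; worse, the family of the player at $R_{\sigma_{i+1}}$ may die out, in which case the right edge retreats and the gap-closing time need not scale with $G_i$ at all. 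A secondary gap is your bound $E(G_i)\le C$: weak convergence of the edge-relative measures $\mu_t^1,\mu_t^2$ does not by itself control the expectation of an unbounded functional such as the inter-particle distance uniformly in $i$.

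The paper closes both gaps with one construction. It couples $\xi_t$ with a contact process $\eta_t$ started from the all-occupied configuration (using only the death marks and single arrows), so that $\eta_t(x)=1$ forces $\xi_t(x)>0$; by the Liggett--Steif domination the law of $\eta_t$ dominates a product measure of density $\rho>0$ for all $t$. Intersecting the occupancy events $O_x$ with the events $S_x$ that the contact process started from the single site $x$ at time $\sigma_{i+1}$ survives forever (probability $\bar\rho>0$), Birkhoff's ergodic theorem produces, within expected distance $O(1/\rho\bar\rho)$ of each edge, a 1 and a 2 whose descendant families never die out; this simultaneously bounds the relevant gap by $1+2/\rho\bar\rho$ (replacing your unjustified $E(G_i)\le C$) and, via the Bezuidenhout--Grimmett shape theorem applied to those \emph{surviving} families, guarantees that $r_t$ drifts linearly right and $l_t$ linearly left, so the meeting time $T$ has $E(T)\le C/\rho\bar\rho$. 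Since the actual rightmost 1 stays to the right of $r_t$ and the leftmost 2 to the left of $l_t$ until they interact, $\tau_{i+1}-\sigma_{i+1}\le T$. If you want to repair your write-up, this survival-conditioning step is the missing ingredient; the ``first-passage and regeneration estimates'' you allude to are exactly what the shape theorem, applied to a nearby surviving family rather than to the edge particle itself, delivers.
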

\begin{proof}
 Note that time~$\sigma_{i + 1}$ is the first time after~$\tau_i$ at which either the~1 at site~$R_{\tau_i}$ dies or the~2 at site~$L_{\tau_i}$ dies.
 Because players die independently at rate one, the time increment~$\sigma_{i + 1} - \tau_i$ is exponentially distributed with parameter two therefore
\begin{equation}
\label{eq:time-1}
  E (\sigma_{i + 1} - \tau_i) = 1/2 \quad \hbox{for all} \quad i \geq 1.
\end{equation}
 Controlling the time increment~$\tau_{i + 1} - \sigma_{i + 1}$ is more difficult.
 To this end, let~$\eta_t$ be the contact process with parameter~$\lambda$ starting from the all occupied configuration constructed from the graphical representation of the multitype contact process~$\xi_t$ described above by only using the death marks~$\times$ and the single arrows~$x \to x \pm 1$.
 For the resulting coupling~$(\xi_t, \eta_t)$,
\begin{equation}
\label{eq:time-2}
  (\eta_t (x) = 1 \ \Longrightarrow \ \xi_t (x) > 0) \quad \hbox{for all} \quad (x, t) \in \Z \times \R_+.
\end{equation}
 Now, let~$\mu_t$ be the distribution of~$\eta_t$.
 By attractiveness, $\mu_t$ decreases to the upper invariant measure~$\bar \mu$ as time goes to infinity.
 In addition, because~$\lambda > \lambda_c$, it follows from~\cite[Corollary~4.1]{liggett_steif_2006} that there exists a positive constant~$\rho$ such that the upper invariant measure dominates stochastically the product measure~$\nu_{\rho}$ with density~$\rho$.
 In summary,
\begin{equation}
\label{eq:time-3}
  (\mu_t \geq \bar \mu \geq \nu_{\rho} \ \ \hbox{for all} \ \ t \geq 0) \quad \hbox{for some} \quad \rho > 0,
\end{equation}
 which, together with the implication~\eqref{eq:time-2}, implies that, at time~$\sigma_{i + 1}$, the distance between the rightmost~1 and the leftmost~2 decays exponentially, i.e.,
 $$ P (L_{\sigma_{i + 1}} - R_{\sigma_{i + 1}} > n + 1) \leq (1 - \rho)^n \quad \hbox{for all} \quad n \geq 1. $$
 The supercritical contact process exhibits linear growth, which suggests that the time it takes for the rightmost~1 and the leftmost~2 to come back together also decays exponentially.
 However, the family of the~1 at time~$R_{\sigma_{i + 1}}$ and/or the family of the~2 at time~$L_{\sigma_{i + 1}}$ may die out quickly, in which case it is unclear that the time increment~$\tau_{i + 1} - \sigma_{i + 1}$ indeed scales like~$L_{\sigma_{i + 1}} - R_{\sigma_{i + 1}}$.
 To control this time increment, we use that the contact process with~$\lambda > \lambda_c$ starting from a single site survives with probability~$\bar \rho > 0$.
 Let~$\zeta_t^x$ be the coupled contact processes starting from
 $$ \zeta_0^x = \{x \} \quad \hbox{for all} \quad x \in \Z $$
 and constructed from the part of the graphical representation of~$\eta_t$ located after the stopping time~$\sigma_{i + 1}$.
 Then, for all~$x \in \Z$, we define the events
 $$ O_x = \{\eta_{\sigma_{i + 1}} (x) = 1 \} \quad \hbox{and} \quad S_x = \{\zeta_t^x \neq \varnothing \ \hbox{for all} \ t \geq 0 \}. $$
 Because the graphical representation is translation-invariant and the Poisson processes are independent, it follows from Birkhoff's ergodic theorem and~\eqref{eq:time-3} that, as~$n \to \infty$,
 $$ \frac{1}{n} \,\sum_{x = 0}^{n - 1} \,\ind \{O_{R_{\tau_i} - x} \cap S_{R_{\tau_i} - x} \}, \
    \frac{1}{n} \,\sum_{x = 0}^{n - 1} \,\ind \{O_{L_{\tau_i} + x} \cap S_{L_{\tau_i} + x} \} \,\to \,P (O_0 \cap S_0) \geq \rho \bar \rho > 0 $$
 with probability one.
 In particular, we have~$E (\bar L_{\sigma_{i + 1}} - \bar R_{\sigma_{i + 1}}) \leq 1 + 2 / \rho \bar \rho$, where
 $$ \bar R_{\sigma_{i + 1}} = \sup \{x \leq R_{\tau_i} : O_x \cap S_x \ \hbox{occurs} \} \quad \hbox{and} \quad
    \bar L_{\sigma_{i + 1}} = \inf \{x \geq L_{\tau_i} : O_x \cap S_x \ \hbox{occurs} \}. $$
 In addition, by the shape theorem~\cite[Theorem~5]{bezuidenhout_grimmett_1990},
 $$ \begin{array}{rcll}
      r_t & \n = \n & \sup \{x \in \Z : \zeta_t^{\bar R_{\sigma_{i + 1}}} (x) = 1 \} & \hbox{drifts linearly to the right}, \vspace*{4pt} \\
      l_t & \n = \n & \inf \{x \in \Z : \zeta_t^{\bar L_{\sigma_{i + 1}}} (x) = 1 \} & \hbox{drifts linearly to the left}, \end{array} $$
 from which it follows that there exists~$C = C (\lambda) < \infty$ such that
\begin{equation}
\label{eq:time-4}
  E (T) \leq C / \rho \bar \rho \quad \hbox{where} \quad T = \inf \{t : r_t > l_t \}.
\end{equation}
 Returning to the process~$\xi_t$, let~$\theta_{i + 1}$ be the first time after~$\sigma_{i + 1}$ the rightmost~1 or leftmost~2 tries to give birth onto the leftmost~2 or rightmost~1.
 Clearly, we have~$\theta_{i + 1} \geq \tau_{i + 1}$.
 In addition, in view of the implication~\eqref{eq:time-2}, the rightmost~1 is on the right of~$r_t$ and the leftmost~2 is on the left of~$l_t$ until at least~$\theta_{i + 1}$ therefore~$\theta_{i + 1} - \sigma_{i + 1} \leq T$.
 This,~\eqref{eq:time-1} and~\eqref{eq:time-4} imply that
 $$ \begin{array}{rcl}
      E (\tau_{i + 1} - \tau_i) & \n = \n & E (\tau_{i + 1} - \sigma_{i + 1}) + E (\sigma_{i + 1} - \tau_i) \vspace*{4pt} \\
                                & \n \leq \n & E (\theta_{i + 1} - \sigma_{i + 1}) + 1/2 \leq E (T) + 1/2 \leq C / \rho \bar \rho + 1/2 < \infty. \end{array} $$
 Since the upper bound is uniform in~$i$, the lemma follows.
\end{proof}
\noindent
 Combining the previous two lemmas, we deduce that the interface~$X_t$ converges almost surely to infinity, which shows that the~1s win and completes the proof of Theorem~\ref{th:a12}.


\end{document}